\DeclareMathOperator{\tr}{Tr}
\DeclareMathOperator{\diag}{diag}
\newcommand{\de}{{\partial}}
\newcommand{\rd}{\mathrm{d}}
\newcommand{\ri}{\mathrm{i}}
\newcommand{\re}{\mathrm{e}}
\newcommand{\bbN}{\mathbb{N}}
\newcommand{\bbZ}{\mathbb{Z}}
\newcommand{\bbC}{\mathbb{C}}
\newcommand{\bbP}{\mathbb{P}}
\def\bary{\begin{array}} 
\def\eary{\end{array}} 
\def\ben{\begin{enumerate}} 
\def\een{\end{enumerate}}
\def\bit{\begin{itemize}} 
\def\eit{\end{itemize}}
\def\nn{\nonumber} 
\newcommand{\cO}{\mathcal{O}}
\newcommand{\LL}{\mathcal{L}}
\newcommand{\cN}{\mathcal{N}}
\newcommand{\HH}{\mathcal{H}}
\newcommand{\cF}{\mathcal{F}}
\newcommand{\cM}{\mathcal M}
\newcommand{\ii}{\mathrm{i}}
\newcommand{\di}{\mathrm{d}}
\def\L{\Lambda}
\def\beq{\begin{equation}}                     %  
\def\eeq{\end{equation}}                       % 
\def\bea{\begin{eqnarray}}                     %         % 
\def\eea{\end{eqnarray}}
\def\bary{\begin{array}} 
\def\eary{\end{array}} 
\def\ben{\begin{enumerate}} 
\def\een{\end{enumerate}}
\def\bit{\begin{itemize}} 
\def\eit{\end{itemize}}
\def\nn{\nonumber} 
\def\de {\partial}
\def\a{\alpha}
\def\b{\beta}
\def\g{\gamma}
\def\d{\delta}
\def\e{\epsilon}
\def\Res{\mathrm{Res}}
\theoremstyle{plain}
\newtheorem{thm}{Theorem}[section]
\newtheorem{lem}[thm]{Lemma}
\newtheorem{prop}[thm]{Proposition}
\newtheorem{conj}[thm]{Conjecture}
\newtheorem*{conj*}{Conjecture}
\newtheorem*{cor*}{Corollary}
\newtheorem{defn}{Definition}[section]
\theoremstyle{definition}
\newtheorem{rem}[thm]{Remark}
\newtheorem{rmk}[thm]{Remark}
\newcommand{\Li}{\operatorname{Li}}
\newcommand{\GIT}[1]{/\!\!/_{\kern-.2em #1 \kern0.1em}}
\renewcommand{\l}{\left}
\renewcommand{\r}{\right}
\newcommand{\bra}{\left\langle}
\newcommand{\ket}{\right\rangle}
\newcommand{\pf}{\noindent {\it Proof.} }
\newcommand{\ev}{\operatorname{ev}}
\def\bred{\begin{color}{red}}
\def\ered{\end{color}}
\def\bes{\begin{subequations}}
\def\ees{\end{subequations}}
\begin{document}

\title[Integrable hierarchies and the mirror model of local $\bbC
  \bbP^1$]{Integrable hierarchies and the mirror model of local $\bbC \bbP^1$}

\author{Andrea Brini}
\address{Section de Math\'ematiques et D\'epartement de Physique Th\'eorique,
  Universit\'e de Gen\`eve, 24 quai Ansermet, CH-1211, Geneva, Switzerland}
\email{Andrea.Brini@unige.ch}
\author{Guido Carlet}
\address{Dipartimento di Matematica e Applicazioni, Universit\`a di Milano - Bicocca, via R. Cozzi, 53, 20125 Milan, Italy}
\email{guido.carlet@unimib.it}
\author{Paolo Rossi}
\address{Institut de Math\'ematiques de Jussieu, Universit\'e Pierre et Marie Curie - Paris 6, 4 place de Jussieu, 75005 Paris, France}
\email{issoroloap@gmail.com}
\subjclass[2000]{81T45 (primary), 81T30, 57M27, 17B37, 14N35}
\keywords{Gromov-Witten, integrable hierarchies, mirror symmetry, 2D-Toda, Ablowitz-Ladik.}
   \dedicatory{Dedicated to Boris Anatolevich Dubrovin on the occasion
     of his $60^{\rm th}$ birthday, with friendship and gratitude}

\begin{abstract}
We study structural aspects of the Ablowitz-Ladik (AL) hierarchy in the light of its
realization as a two-component reduction of the two-dimensional Toda
hierarchy, and establish new results on its connection to the
Gromov-Witten theory of local $\bbC\bbP^1$. We first of all elaborate on the
relation to the Toeplitz lattice 
%\cites{MR1794352, MR1993935, MR2534519} 
and obtain a neat
description of the Lax formulation of the AL system. We then study
the dispersionless limit and rephrase it in terms of a conformal semisimple
Frobenius manifold with non-constant unit, whose properties we thoroughly
analyze. We build on this connection along two main strands. First of all, we
exhibit a manifestly local bi-Hamiltonian structure of the Ablowitz-Ladik
system in the zero-dispersion limit. Secondarily, we make precise the relation between this canonical Frobenius
structure and the one that underlies the Gromov-Witten theory of the resolved
conifold 
%\cite{Brini:2010ap} 
in the equivariantly Calabi-Yau case; a key role
is played by Dubrovin's notion of ``almost duality'' of Frobenius manifolds. As a
consequence, we obtain a derivation of genus zero mirror symmetry for
local $\bbC\bbP^1$ in terms of a dual logarithmic Landau-Ginzburg model.
\end{abstract}

\maketitle
\tableofcontents

\clearpage
\section{Introduction}
Integrable hierarchies find a special place of appearance in moduli
space problems motivated by topological field theory. A prominent case study is provided by the classical integrable hierarchies that
conjecturally govern the Gromov--Witten theory of symplectic manifolds. 
Denote with $\overline{\cM}_{g,n}(X,\beta)$ the stable compactification \cite{Kontsevich:1994na} of the
moduli space of degree $\beta\in H_2(X,\bbZ)$ $J$-holomorphic maps from $n$-pointed,
arithmetic genus $g$ curves to a K\"ahler manifold $(X,J,\omega)$. The Gromov--Witten invariants of $X$ are defined as
\beq
\bra \tau_{p_1}(\phi_{\a_1}) \dots  \tau_{p_n}(\phi_{\a_n}) \ket_{g,n,\b}^X :=
\int_{[\overline{\cM}_{g,n}(X,\b)]^{\rm vir}} \prod_{i=1}^n \ev^*_i(\phi_{\a_i})\psi_i^{p_i}
\eeq
where $[\overline{\cM}_{g,n}(X,\b)]^{\rm vir}$ is the virtual fundamental
class of $\overline{\cM}_{g,n}(X,\b)$, 
%\cite{Li-Tian,bf}, 
$\phi_{\a_i}\in
H^\bullet(X, \bbC)$ are arbitrary co-homology classes of $X$,
$\ev_i:\overline{\cM}_{g,n}(X,\b)\to X$ is the evaluation map at the $i^{\rm th}$
marked point, and $\psi_i=c_1(\LL_i)$ are the first Chern classes of the
universal cotangent line bundles $\LL_i$ on $\overline{\cM}_{g,n}(X,\b)$.
These numbers are interesting from a variety of points of view: in string
theory, they compute
worldsheet instanton effects for type IIA strings; in symplectic topology, they yield a highly sophisticated set of
invariants of the symplectic structure $\omega$; in enumerative algebraic
geometry, they
have an interpretation as a ``virtual count'' of holomorphic
curves inside $X$. \\

Kontsevich's celebrated proof \cite{Kontsevich:1992ti} of  Witten's conjecture \cite{MR1068086, Witten:1990hr}
 relating the Korteweg--de Vries hierarchy to 
intersection theory on the
Deligne--Mumford moduli space of curves $\overline{\cM}_{g,n}$ suggested a
connection between Gromov--Witten theory and integrable systems in the
following form. Let $\e$ and $t^{\alpha,p}$ be formal symbols, where $\a =\{1,\dots, h_X\}$, $h_X
  :=\dim_\bbC H^\bullet(X, \bbC)$ and $p \in \bbN$, and denote with
$\mathbf{t}$ the set $\mathbf{t}:=\l\{t^{\a,p}\r\}_{\substack{\a \in h_X
    \\ p\in \bbN}}$. Write $\phi_1$ for the identity of $H^\bullet(X)$ and define $x:=t^{1,0}$.
The all-genus, full-descendant Gromov--Witten potential of $X$ is the formal power series
\bea
 \mathcal{F}^X(\e, \mathbf{t}) = \sum_{g \geq 0} \e^{2g-2}\sum_{ \beta \in H_2(X,\bbZ)}\sum_{n \geq 0} \sum_{p_1,\ldots,p_n}
   {\prod_{i=1}^n t^{\a_i,p_i} \over n!}
\bra \tau_{p_1}(\phi_{\a_1}) \dots  \tau_{p_n}(\phi_{\a_n}) \ket_{g,n,\b}^X.
\label{eq:potX}
\eea
We then have the following
\begin{conj}
\label{conj:integrableGW}
Let $\cF^X(\e, \mathbf{t})$ denote the all-genus full descendant Gromov-Witten potential of $X$. Then there exists a Hamiltonian integrable hierarchy of PDEs such that  $\e^2 \mathcal{F}^X(\e, \mathbf{t})$ is the logarithm of a $\tau$--function associated with one of its solutions. The variables $t^{\a,p}$ are identified with times of the hierarchy, and the genus counting variable $\e$ with a perturbative parameter in a small dispersion expansion of the equations.
\end{conj}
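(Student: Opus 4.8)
The plan is to follow the Dubrovin--Zhang reconstruction program, which builds the sought hierarchy out of the genus-zero sector by a preferred dispersive deformation. First I would promote the primary genus-zero invariants to a Frobenius manifold structure on $M := H^\bullet(X,\bbC)$: the three-point functions $\langle\!\langle \phi_\alpha,\phi_\beta,\phi_\gamma\rangle\!\rangle_0$ supply, fiberwise, the structure constants of a commutative associative algebra with flat identity $\phi_1$, the Poincar\'e pairing gives the flat metric, associativity is the content of the WDVV equations, and the cohomological grading together with the dimension axiom furnish the Euler field and the conformal charge. This is nothing but the quantum cohomology of $X$, realized as a conformal Frobenius manifold.

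Second, I would construct the principal hierarchy of $M$. To each flat coordinate $t^\alpha$ and each $p \in \bbN$ one associates a Hamiltonian whose density is read off from the deformed flat coordinates $\theta_{\alpha,p}$ of the Frobenius structure; the resulting commuting flows are first-order quasilinear PDEs, Hamiltonian with respect to the hydrodynamic Poisson bracket attached to the flat metric. By construction these are exactly the equations whose logarithm-of-tau is the genus-zero potential $\mathcal{F}^X_0$, with the descendant couplings $t^{\alpha,p}$ as the hierarchy times and $x = t^{1,0}$ as the spatial variable.

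Third --- and this is where the dispersion enters --- I would invoke the Dubrovin--Zhang existence and uniqueness theorem: assuming $M$ is semisimple, the principal hierarchy admits a deformation, unique up to Miura equivalence, to a full Hamiltonian integrable hierarchy in powers of $\epsilon$ whose tau-function is constrained by the loop equation. Quasi-triviality ensures that this deformation is generated from the dispersionless one by a quasi-Miura transformation assembled from the higher-genus free energies, preserving both the Hamiltonian and the integrable structure. Identifying the reconstructed tau-function with $\exp\!\left(\epsilon^2 \mathcal{F}^X\right)$ then reduces to checking that $\mathcal{F}^X$ solves the same loop equation --- equivalently, that $X$ obeys the genus-zero topological recursion relations together with the Virasoro constraints --- while the string and dilaton equations single out the specific topological solution.

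The hard part, and precisely what keeps this a conjecture in full generality, is twofold. First, the Dubrovin--Zhang reconstruction is available only under semisimplicity, which fails whenever the quantum cohomology carries nilpotents, as it does for many higher-dimensional Calabi--Yau targets; off the semisimple locus no canonical dispersive deformation is known to exist. Second, even on the semisimple locus the matching of the two tau-functions rests on the Virasoro constraints for $X$, themselves established only in particular cases. The honest outcome of the program is thus a proof \emph{conditional} on semisimplicity and on the Virasoro constraints; an unconditional proof must proceed case by case, and it is one such case --- the resolved conifold via the Ablowitz--Ladik hierarchy --- that the present paper settles.
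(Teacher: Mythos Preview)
The statement you are addressing is labelled as a \emph{Conjecture} in the paper, and the paper offers no proof of it; it is stated as motivation and background, with the Witten--Kontsevich theorem quoted as the case $X=\{\mathrm{pt}\}$ and the Dubrovin--Zhang program and various orbifold examples cited as partial progress. There is therefore no ``paper's own proof'' to compare against.

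That said, your outline of the Dubrovin--Zhang reconstruction is an accurate summary of the standard strategy, and you are right to flag semisimplicity and the Virasoro constraints as the obstructions that keep it conjectural. Two corrections are in order. First, your final sentence overstates what the present paper achieves: it does not settle Conjecture~\ref{conj:integrableGW} for the resolved conifold. The earlier work \cite{Brini:2010ap} verified the match only at the first few orders in the genus expansion, and the present paper is concerned almost entirely with the dispersionless (genus-zero) structure --- the Frobenius manifold, its almost-dual, the bi-Hamiltonian pencil, and the Landau--Ginzburg mirror --- not with the full dispersive hierarchy or an all-genus identification of tau-functions. Second, the equivariant quantum cohomology in play here actually \emph{fails} one of the axioms you invoke in step one: as the paper stresses, the Euler vector field is ill-defined because the equivariant degree axiom breaks down, so the Frobenius manifold on the Gromov--Witten side is non-conformal. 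This is precisely why the paper routes the construction through a different, conformal Frobenius manifold (with non-flat unit) and then recovers the Gromov--Witten prepotential via Dubrovin's almost duality, rather than applying the textbook Dubrovin--Zhang machinery directly.
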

The case $X=\{\mathrm{pt}\}$ is the statement of the Witten--Kontsevich theorem. This connection has provided a mutually fruitful source of insights for the integrable
systems community on one hand, and for symplectic and algebraic geometers on
the other. \\

After the Witten--Kontsevich theorem, a lot of effort has been put to further elucidate the origin of integrability
in Gromov--Witten theory, and to find constructuve proofs of Conjecture \ref{conj:integrableGW}
for more general target spaces. Research in this direction has
received much attention in the early 90's, starting from the discovery
by Dubrovin and Krichever of a clear link
between topological Landau--Ginzburg models and the hydrodynamics of weakly
deformed soliton lattices \cite{Krichever:1992qe, Dubrovin:1992eu}.
Subsequently, the field gained further momentum from Dubrovin's systematic
study of WDVV equations \cite{MR1068086, Dijkgraaf:1990qw} and his universal
construction \cite{Dubrovin:1992dz}, for  arbitrary homogeneous
chiral algebras, of dispersionless (bi-)Hamiltonian integrable
hierarchies that encode the descendent sector of the theory - namely, in the case of quantum co-homologies, the complete set of descendent genus zero
Gromov--Witten invariants. \\ In more recent times, the (hard) task of
incorporating dispersive corrections in the picture - corresponding, in the
original Witten-Kontsevich picture, to higher genus Gromov--Witten invariants
- has followed two main strands. On one hand, the Dubrovin--Zhang
program of classification of normal forms of bi-Hamiltonian evolutionary
hierarchies has provided a concrete incarnation of Virasoro constraints from the
integrable system point of view, along with a complete reconstruction theorem
for higher genus descendent invariants \cite{dubrovin-2001}; on the other, generalizations of the Witten--Kontsevich
correspondence were explicitly constructed for the quantum co-homology of
simple target orbifolds, such as $BG$ \cite{MR1950944} and $[\mathbb{CP}^1/G]$
\cite{MR2199226, MR2433616, pauljohnsonthesis, rossi-2008}.
\subsection{The resolved conifold and the Ablowitz--Ladik hierarchy}
Inspired by the appearance of trilogarithmic prepotentials in Dubrovin's study
of the Ablowitz--Ladik (AL) hierarchy \cite{MR2462355}, one of us proposed in
\cite{Brini:2010ap} that Conjecture \ref{conj:integrableGW} should
hold true when $X$ is a particular local Calabi-Yau manifold of
dimension three -- the resolved conifold -- 
given by the strict transform 
%$\cO_{\bbP^1}(-1)\oplus\cO_{\bbP^1}(-1)$ 
of the
nodal quadric in $\mathbb{A}^4$, and the corresponding integrable hierarchy is
the Ablowitz--Ladik hierarchy \cite{MR0377223}. The precise statement, which was proven in
\cite{Brini:2010ap} at the first few orders in the genus expansion, relates a
peculiar form of the AL hierarchy to
the Gromov-Witten theory of $\cO_{\bbP^1}(-1)_{[\nu]}\oplus\cO_{\bbP^1}(-1)_{[-\nu]}$, equivariant with respect to a
fiber-wise $T\simeq \bbC^*$-action which covers the trivial action on the base
$\bbP^1$ and rotates the fibers with opposite weights; we denoted with $\nu$ the first Chern class of the
line bundle $\cO(1)\to BT \simeq \bbC\bbP^\infty$. Restricting to genus zero, primary
invariants, this statement can be rephrased as the equality of the quantum
co-homology ring of the resolved conifold in the equivariantly Calabi-Yau
case with the Frobenius structure that arises from a particular solution of
WDVV, first encountered in the treatment of the AL system in
\cite{MR2462355}. \\

This example, which is of remarkable importance in the Gromov--Witten theory
of Calabi-Yau threefolds, raised various interesting questions: among them,
the possibility to provide a local mirror symmetry construction for this
equivariant case, and the explanation of the apparent breakdown of bi-Hamiltonianity on the integrable
system side. In this paper we study both aspects in detail, by regarding the
AL hierarchy as the Toeplitz reduction of 2D-Toda \cites{MR1794352, MR1993935,
  MR2534519}. 
We first of all build, in Section~\ref{sec:hom}, on the identification of the AL lattice with the Toeplitz lattice and provide a clean proof of the invariance of the Toeplitz condition under the 2D-Toda flows at the dispersive level in the bi-infinite case. By observing that the Toeplitz Lax matrices admit a factorization in terms of two bi-diagonal matrices, we show that the Toeplitz lattice is an instance of rational reduction of the 2D-Toda hierarchy, which can be defined in general by analogy with the rational reductions of KP hierarchy~\cite{MR1340299, MR1352392}.
We then apply the dispersionless Lax formalism of 2D-Toda to
associate a new, canonical Frobenius structure with the hierarchy. This new
Frobenius manifold is {\it different} from the one that appears in
Gromov--Witten theory: it fails to have a covariantly constant unit vector
field, but it satisfies {\it all} the other axioms of a Frobenius
manifold, including somewhat surprisingly the existence of a linear Euler vector field. In
particular, this entails the existence of a local bi-Hamiltonian structure of
Dubrovin--Novikov type \cite{MR1037010}, to be contrasted with the
inhomogeneity of the prepotentials in \cite{MR2462355, Brini:2010ap} and the non--locality of the
pairs constructed in \cite{MR2238744}. \\
A natural question that arises is then how this new Frobenius structure and
the quantum co-homology of the resolved conifold are
related to one another. We find  in Section \ref{sec:inhom} that the relation
in question is remarkably given in the form of Dubrovin's almost
duality of Frobenius manifolds \cite{MR2070050}. By pushing the dispersionless
Lax formalism through the duality we obtain a logarithmic Landau--Ginzburg
mirror for local $\mathbb{CP}^1$, close in form to the LG models proposed by Hori, Iqbal and
Vafa for the non-equivariant theory \cite{Hori:2000ck}. We finally study the period
structure of the resulting
almost Frobenius manifold, and find a remarkable connection to the theory of singularities of divisors
considered in the context of twisted Picard--Lefschetz theory by Givental in
\cite{MR936695}. We conclude in Section \ref{sec:concl} with remarks on open
problems and new avenues of research.

\vspace{1cm}

\subsection*{Acknowledgements.} It is a privilege to dedicate this work to
Boris Dubrovin, for his guidance during our PhD years in Trieste and
for the many enlightening (and fun!) discussions we have had with him over the last few years. We would moreover like to thank
M.~Cafasso, I.~Krichever, P.~Lorenzoni, F.~Magri and all the participants of the
conference on ``Integrable Systems in Pure and Applied Mathematics'' at
Alghero, June 2010, for discussions and their interest in this project. We
would also like to thank the Complex Geometry Group of Universit\'e
Paris VI, the Departments of Mathematics of the Universities of Geneva, Glasgow and Milano-Bicocca, and in particular
G.~Falqui, X.~Ma, I.~Strachan and A.~Szenes for kind hospitality while this work was being prepared.
A.~B. was supported by a Postdoctoral Fellowship of the Swiss National Science
Foundation (FNS); partial support from the INdAM-GNFM Grant ``Teoria di
stringa topologica e sistemi integrabili'' is also acknowledged. G.~C. wishes to acknowledge the support of the Center for Mathematics of the University of Coimbra (CMUC), the Mathematical Physics sector of SISSA in Trieste and the Mathematics Department of University of Milano-Bicocca. 
P.~R. was supported by a Postdoc of Fondation de Sciences Math\'ematiques de Paris at the Institut de Math\'ematiques de Jussieu, UPMC, Paris 6.

\section{Ablowitz-Ladik and $2D$-Toda hierarchies}
\label{sec:hom}
%Let $\{x_n\}_{n\in\bbZ} \in \mathit{l}^1, \{y_n\}_{n\in\bbZ} \in \mathit{l}^1$ be a two-component
%lattice. 
%
%{\red why require $l^1$ ?}

\subsection{Ablowitz-Ladik system and Toeplitz lattice}

The complexified Ablowitz-Ladik (AL) system \cite{MR0377223} is given by the pair of equations
\bea
\dot{x}_n &=& \frac{1}{2}(1-x_n y_n)(x_{n-1}+x_{n+1})
%+x_n 
, \nn \\
\dot{y}_n &=& -\frac{1}{2}(1-x_n y_n)(y_{n-1}+y_{n+1})
%-y_n
\label{eq:AL}
\eea
defining the time evolution of two sequences of complex variables $x_n$,  $y_n$ with $n\in\bbZ$.
The AL system admits an infinite number of conservation laws and is part of a hierarchy of mutually commuting evolutionary flows, usually described by semidiscrete zero-curvature equations~\cite{MR0377223, MR1993935}. 

In the semi-infinite case the AL hierarchy is equivalent, as noted in \cites{MR1794352, MR1993935} and shown in detail by Cafasso \cite{MR2534519}, to a
peculiar reduction of the 2D-Toda lattice hierarchy, called the Toeplitz lattice, which naturally arises in the study of the integrable dynamics of moment matrices associated with biorthogonal polynomials on the unit
circle and whose orbits are selected by Toeplitz initial data for an associated factorization problem~\cite{MR0810626}. In particular it describes the solution associated with a unitary matrix model~\cite{MR1794352}. 

Rather than dealing with the semi-infinite case, we present here a slightly more general definition of the Toeplitz lattice in the case of bi-infinite matrices, i.e. we assume that the matrix indices below span integer values, $n,m \in \bbZ$. 
This choice turns out to be somehow more natural, allowing us to easily identify the Toeplitz lattice with a rational reduction of the 2D-Toda hierarchy and to obtain the dispersionless limit that we need later. The semi-infinite case will be recovered as a further simple reduction (see  Appendix A for further details of this analysis in the case of the semi-infinite Toeplitz lattice). 

Recall that the 2D-Toda Lax matrices~\cite{MR810623} are given by
\beq
L_1 =  \Lambda + \sum_{j\leq 0}u^{(1)}_{j} \Lambda^{j}, \qquad
L_2 = u^{(2)}_{-1}\Lambda^{-1} + \sum_{j\geq 0}u^{(2)}_{j} \Lambda^{j} 
\label{eq:2dtlax}
\eeq
where $\Lambda$ is the shift matrix
\beq
\Lambda_{n,m}=\delta_{n+1,m} ,
\eeq
the diagonal matrices $u_j^{(i)}$ represent the dependent variables and the matrix indices $n,m \in \bbZ$.  The 2D-Toda flows can be written in the Lax form as
\beq \label{2dtlax}
\de_{s^{(1)}_j}L_i=\l[\l(L_1^j\r)_+,L_i\r], \quad \de_{s^{(2)}_j}L_i=\l[\l(L_2^j\r)_-,L_i\r], \qquad i=1,2
\eeq
where we denoted by $M_+$ (resp. $M_-$) the upper (resp. lower) diagonal part
of a matrix $M$, including (resp. excluding) the main diagonal.
\begin{defn}
We say that $L_1$ and $L_2$ are  {\rm Toeplitz} Lax matrices if they can be
written in the form\footnote{The horizontal and vertical lines separate the entries with negative and non-negative values of indices.}
\bes \label{L12-m}
\beq
L_1=
\left(
\begin{array}{ccc|ccccc}
\ddots 	& & & & & &  \\
&-x_{-1} y_{-2} 	&1	&0&0&0& \\
&- v_{-1} x_0 y_{-2} & - x_0 y_{-1} &1 &0&0& \\ \hline
&-v_{-1} v_0 x_1 y_{-2} &-v_0 x_1 y_{-1} &-x_1 y_0 & 1 &0 & \\
&- v_{-1} v_0 v_1 x_2 y_{-2} &- v_0 v_1 x_2 y_{-1} & -v_1 x_2 y_0 &-x_2 y_1 &1 & \\
&- v_{-1} v_0 v_1 v_2 x_3 y_{-2} &-v_0 v_1 v_2 x_3 y_{-1} & -v_1 v_2  x_3 y_0 & - v_2 x_3 y_1 &-x_3 y_2  & \\
%& &- v v v v x y & -v_1 v_2 v_3 x_4 y_0 &-v_2 v_3 x_4 y_1 &-v_3 x_4 y_2 &-x_4 y_3 \\
&&&&&&\ddots
\end{array}
\right), 
\eeq
\beq
L_2=
\left(
\begin{array}{ccc|ccccc}
\ddots 	& & & & & &  \\
&-x_{-2}y_{-1} &-x_{-2}y_0 &-x_{-2}y_1 &-x_{-2}y_2 &-x_{-2}y_3 & \\
&v_{-1} &-x_{-1}y_0 &-x_{-1}y_1 &-x_{-1}y_2 &-x_{-1}y_3 & \\ \hline
&0&v_0 &-x_0y_1&-x_0y_2&-x_0y_3& \\
&0&0&v_1 &-x_1y_2 &-x_1y_3 & \\
&0 &0 &0 &v_2 &-x_2y_3 & \\
&&&&&&\ddots
\end{array}
\right) 
\eeq
\ees
where $x_n$, $y_n \in \bbC$, $n\in \bbZ$ and $v_n:=1-x_n y_n$.
\end{defn}
We will prove shortly that this is indeed a symmetry reduction of the 2D-Toda
lattice. \\

It is convenient to write these matrices in the equivalent form
\bes \label{L12-1}
\begin{align}
L_1 &= \Lambda - x^+  \left(1-(1-xy) \Lambda^{-1} \right)^{-1} y,  \\
L_2 &= (1-xy) \L^{-1} - x \left(1-\Lambda\right)^{-1} y^+ \label{L12-1b}
\end{align}
\ees
where $x$, resp. $y$, are diagonal matrices with entries given by $x_n$, resp. $y_n$, and $x^+$ denotes the shifted variable $x$, $\L x = x^+ \L$. 

Here and in the following the formal inverse of a matrix of the form $1-X$ is given by geometric series in $X$ and for this reason we sometimes denote it with $\frac1{1-X}$. Note that this is a proper (left and right) inverse of the bi-diagonal matrix $1-X$ with respect to the usual matrix multiplication\footnote{One should be aware of several fragile features of matrix multiplication when dealing with bi-infinite or semi-infinite matrices. In particular properties like associativity of the matrix product, existence and uniqueness of left and right inverses and their relation with the inverses of the corresponding linear map may not be taken for granted. See e.g.~\cite{MR0072965} for some examples.}. 

Note that we can also write
\beq \label{L12-2}
L_1 = \L \frac1v \l( 1-x \frac1{1-v\L^{-1}}y \r), \quad 
L_2 = \l(1- x \frac1{1-\L} y\r) \L^{-1}. 
\eeq

One can easily recognize $L_2$ to be a simple extension to the bi-infinite case of the semi-infinite version given in~\cite{MR1794352}. On the other hand $L_1$ is usually given in a dressed form. To see this, let $\ell$ be a diagonal matrix 
\beq
\ell=\diag\l(\dots,\ell_{-1},\ell_0, \ell_1, \dots\r)
\eeq
with entries that satisfy
\beq \label{h-rel}
\frac{\ell_{n+1}}{\ell_n}= 1- x_{n+1} y_{n+1} , \qquad n\in\bbZ . 
\eeq
Then
\begin{lem}
We have 
\[
\ell^{-1} L_1 \ell = \L \l( 1 - x \frac1{1-\L^{-1}} y \r)
\]
where $(1-\L^{-1})^{-1}$ is given by the matrix $\sum_{k\geq0} \L^{-k}$.
\end{lem}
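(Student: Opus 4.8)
The plan is to obtain the identity by conjugating the closed form \eqref{L12-1} of $L_1$, namely $L_1=\L - x^+\l(1-(1-xy)\L^{-1}\r)^{-1}y$, and to exploit that conjugation by the invertible diagonal matrix $\ell$ merely rescales entries, $\l(\ell^{-1}M\ell\r)_{n,m}=\ell_n^{-1}M_{n,m}\ell_m$, and is therefore an algebra homomorphism wherever matrix products are defined; in particular it fixes every diagonal matrix and commutes with the locally finite sums that define our formal inverses, so none of the subtleties about bi-infinite inverses flagged above will intervene. I would first record the basic commutation relation $\L D=D^+\L$ for any diagonal $D$ (with $(D^+)_n:=D_{n+1}$), from which $\ell^{-1}\L\ell=(\ell^{-1}\ell^+)\L$; here $\ell^{-1}\ell^+$ is the diagonal matrix whose $n$-th entry $\ell_{n+1}/\ell_n$ equals, by \eqref{h-rel}, $1-x_{n+1}y_{n+1}=v_{n+1}$, so that $\ell^{-1}\L\ell=v^+\L$, with $v:=1-xy$.

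The key computation is the effect of conjugation on the resolvent factor. Expanding $(1-v\L^{-1})^{-1}=\sum_{k\geq0}(v\L^{-1})^k$ and conjugating term by term gives
\[
\ell^{-1}\l(1-v\L^{-1}\r)^{-1}\ell=\sum_{k\geq0}\l(\ell^{-1}\,v\L^{-1}\,\ell\r)^k ,
\]
and since $v\L^{-1}$ carries the single entry $v_n$ in position $(n,n-1)$, its conjugate carries there $\ell_n^{-1}v_n\ell_{n-1}=v_n\,(\ell_{n-1}/\ell_n)=v_n/v_n=1$, again by \eqref{h-rel}; hence $\ell^{-1}\,v\L^{-1}\,\ell=\L^{-1}$ and therefore $\ell^{-1}(1-v\L^{-1})^{-1}\ell=\sum_{k\geq0}\L^{-k}=(1-\L^{-1})^{-1}$. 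Feeding this, $\ell^{-1}\L\ell=v^+\L$, and the fact that the diagonal matrices $x^+$ and $y$ commute with $\ell$ into the conjugate of \eqref{L12-1}, I get
\[
\ell^{-1}L_1\ell=v^+\L-x^+\l(1-\L^{-1}\r)^{-1}y .
\]

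It only remains to recognise the right-hand side as $\L\l(1-x(1-\L^{-1})^{-1}y\r)$. For that I would use the locally finite identity $\L(1-\L^{-1})^{-1}=\L\sum_{k\geq0}\L^{-k}=\L+(1-\L^{-1})^{-1}$ together with $\L D=D^+\L$: expanding,
\[
\L\l(1-x(1-\L^{-1})^{-1}y\r)=\L-x^+\L(1-\L^{-1})^{-1}y=\L-(xy)^+\L-x^+(1-\L^{-1})^{-1}y ,
\]
and since $1-(xy)^+=(1-xy)^+=v^+$, the right-hand side equals $v^+\L-x^+(1-\L^{-1})^{-1}y=\ell^{-1}L_1\ell$, which proves the claim. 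I do not expect any genuine obstacle: the argument is pure bookkeeping with the shift $\L$, and the only step that calls for a moment's care — manipulating the formal geometric-series inverses of bi-infinite bi-diagonal matrices — becomes innocuous once conjugation is performed term by term, since conjugating by a diagonal invertible matrix preserves the band structure and only rescales entries.
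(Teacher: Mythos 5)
Your argument is correct and is essentially the paper's: both proofs conjugate $L_1$ by $\ell$ and use \eqref{h-rel} in the form $\ell^{-1}\L\ell = v^+\L$ (equivalently $\ell\L\ell^{-1}=\L\tfrac1{1-xy}$) and $\ell^{-1}\,v\L^{-1}\,\ell=\L^{-1}$, the latter turning the resolvent $(1-v\L^{-1})^{-1}$ into $(1-\L^{-1})^{-1}$ term by term. The only cosmetic difference is that you start from the form \eqref{L12-1} and hence need the extra rearrangement $\L(1-\L^{-1})^{-1}=\L+(1-\L^{-1})^{-1}$ at the end, whereas starting from \eqref{L12-2}, $L_1=\L\tfrac1v\bigl(1-x\tfrac1{1-v\L^{-1}}y\bigr)$, the factors of $v$ cancel immediately.
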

\begin{proof}
A simple computation, rewriting~\eqref{h-rel} as
\beq
\ell \Lambda \ell^{-1} = \L \frac1{1-xy}, \qquad
\ell \L^{-1} \ell^{-1} = (1-xy) \L^{-1} . \nn 
\eeq \qedhere
\end{proof}
Explicitly
\beq
\ell^{-1} L_1 \ell=
\left(
\begin{array}{ccc|ccccc}
\ddots 	& & & & & &  \\
&-x_{-1} y_{-2} 	&v_{-1}	&0&0&0& \\
&-  x_0 y_{-2} & - x_0 y_{-1} &v_0 &0&0& \\ \hline
&- x_1 y_{-2} &- x_1 y_{-1} &-x_1 y_0 & v_1 &0 & \\
&-  x_2 y_{-2} &- x_2 y_{-1} & - x_2 y_0 &-x_2 y_1 &v_2 & \\
&- x_3 y_{-2} &- x_3 y_{-1} & -  x_3 y_0 & -  x_3 y_1 &-x_3 y_2  & \\
%& &- v v v v x y & -v_1 v_2 v_3 x_4 y_0 &-v_2 v_3 x_4 y_1 &-v_3 x_4 y_2 &-x_4 y_3 \\
&&&&&&\ddots
\end{array}
\right) \nn
\eeq
which is the obvious extension of $\ell^{-1} L_1 \ell$ appearing in the semi-infinite Toeplitz lattice. 

We now show that the form of these matrices is preserved by 2D-Toda flows and that they correspond to the simplest rational reduction. This follows from  two simple observations.
\begin{prop} \label{prop-fact}
The Lax operators $L_i$ can be factorized as
\beq \label{ab}
L_1 = A B^{-1} , \qquad 
L_2 = B A^{-1}
\eeq
where the bi-diagonal matrices $A$ and $B$ are given by
\beq \label{aby}
A = -\frac1{y^+} (1- \L) y, \qquad
B = \frac1y (1-v\L^{-1}) y .
\eeq
\end{prop}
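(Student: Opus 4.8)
The plan is to verify the two factorizations in \eqref{ab} by direct computation from the closed forms \eqref{L12-1}--\eqref{L12-1b}, using the elementary algebra of the bi-diagonal matrices $1-\L$, $1-v\L^{-1}$ and their geometric-series inverses. First I would record the basic commutation relations that make the computation bookkeeping-free: since $x$, $y$, $v$ are diagonal and $\L$ is the shift, one has $\L f = f^+ \L$ for any diagonal $f$, so $A = -\frac1{y^+}(1-\L)y = -\frac1{y^+}(y - \L y) = -\frac1{y^+}(y - y^{++}\L)$ — wait, more cleanly $A = -\frac{1}{y^+}(1-\Lambda) y$, and $B = \frac1y(1 - v\L^{-1})y = \frac1y(y - v\L^{-1}y) = 1 - \frac1y v (\L^{-1}y)$. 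The key point is that $B$ is lower bi-diagonal with unit main diagonal, hence invertible with $B^{-1} = \frac1y (1-v\L^{-1})^{-1} y = \frac1y\big(\sum_{k\ge 0}(v\L^{-1})^k\big) y$, and similarly $A$ is upper bi-diagonal.

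For $L_1 = AB^{-1}$: compute
\[
A B^{-1} = -\frac{1}{y^+}(1-\L) y \cdot \frac1y (1-v\L^{-1})^{-1} y
= -\frac{1}{y^+}(1-\L)(1-v\L^{-1})^{-1} y .
\]
Now $(1-\L)(1-v\L^{-1})^{-1}$ should be massaged into $-\L + (\text{something})$; the natural trick is to write $1-\L = -\L(1-\L^{-1})$ and, more usefully, to split $1-\L = (1 - v\L^{-1}) + (v\L^{-1} - \L)$, so that
\[
(1-\L)(1-v\L^{-1})^{-1} = 1 + (v\L^{-1}-\L)(1-v\L^{-1})^{-1}.
\]
One then identifies $v\L^{-1} - \L$ in terms that, after conjugation by $-\frac{1}{y^+}(\cdot) y$, reproduces $\L - x^+(1-(1-xy)\L^{-1})^{-1}y$; here the relation $v = 1-xy$ and a shift of index (moving the leading $\L$ past $\frac1{y^+}$) are what make the two leading terms match, and the residual term collapses to exactly $-x^+(1-v\L^{-1})^{-1}y$ after using $\L\cdot\frac1{y^+} = \frac1{y^{++}}\cdot\L$ type identities. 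For $L_2 = BA^{-1}$ the computation is the mirror image: $A^{-1} = \frac1y(1-\L)^{-1}y^+$ with $(1-\L)^{-1}=\sum_{k\ge0}\L^k$, so
\[
BA^{-1} = \frac1y(1-v\L^{-1})y\cdot\frac1y(1-\L)^{-1}y^+ = \frac1y(1-v\L^{-1})(1-\L)^{-1}y^+,
\]
and the analogous splitting $1-v\L^{-1} = (1-\L) + (\L - v\L^{-1})$ produces $1 + (\L-v\L^{-1})(1-\L)^{-1}$, which one rearranges — using $v\L^{-1} = (1-xy)\L^{-1}$ and shifting — into $(1-xy)\L^{-1} - x(1-\L)^{-1}y^+$, i.e.\ \eqref{L12-1b}. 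As a consistency check one also verifies $L_1 L_2 = A B^{-1} B A^{-1} = 1$ is \emph{not} expected (these are not mutually inverse); rather the meaningful check is that the entries reproduce the explicit matrices \eqref{L12-m}, which can be confirmed on the first few diagonals.

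The main obstacle is purely the careful handling of shifts and the non-commutativity of diagonal matrices with $\L$: every time $\L$ or $\L^{-1}$ is moved past a factor of $y$, $\frac1y$, or $\frac1{y^+}$ the index gets shifted, and one must track this consistently so that the two summands assemble into precisely the forms in \eqref{L12-1}. A secondary subtlety, already flagged in the paper's footnote, is that for bi-infinite matrices associativity and the left/right inverse property are not automatic; here it is safe because $A$, $B$, $1-v\L^{-1}$, $1-\L$ are all bi-diagonal with invertible (indeed unit) main diagonal, so their geometric-series inverses are genuine two-sided inverses and all the manipulated products involve only finitely many nonzero diagonals in each entry — a point worth stating explicitly before starting the computation.
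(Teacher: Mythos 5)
Your proposal follows essentially the same route as the paper: the paper's proof is exactly this direct computation (carried out for $L_2$), writing $BA^{-1} = -\frac1y\l(1-v\L^{-1}\r)\l(1-\L\r)^{-1}y^+$ and using the splitting $v\L^{-1}-1 = v\L^{-1}(1-\L) - xy$ so that the $(1-\L)^{-1}$ tail collapses onto $-x(1-\L)^{-1}y^+$; your splitting $1-v\L^{-1}=(1-\L)+(\L-v\L^{-1})$ lands in the same place after simplification. One slip to fix: since $A=-\frac1{y^+}(1-\L)y$, its inverse is $A^{-1}=-\frac1y(1-\L)^{-1}y^+$, so your displayed expression for $BA^{-1}$ is off by an overall sign, and carried literally it would produce $-L_2$; with the sign restored the computation closes as you describe. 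It is also worth making explicit, as the paper does, that the step $\L^{-1}\L=1$ used in this rearrangement is precisely what fails in the semi-infinite case, which is why $L_2$ acquires the correction term $E$ there.
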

\begin{proof}
A simple computation, in the case of $L_2$ 
\begin{align}
B A^{-1} &= -\frac1y \l( 1- v\L^{-1} \r) \l( 1-\L \r)^{-1} y^+ \nn \\
&= \frac1y \l( v \L^{-1} (1-\L) - xy \r) (1-\L)^{-1} y^+ \nn \\
&= \frac{v}y \L^{-1} y^+ - x (1-\L)^{-1} y^+ \nn 
\end{align}
that is equal to~\eqref{L12-1b}. Notice that in the second equality we have used  the identity $\L^{-1} \L=1$, which is not satisfied in the semi-infinite case. 
\end{proof}
Explicitly
\beq
A =
\left(
\begin{array}{cc|ccc}
\ddots 	& & & &   \\
&- \frac{y_{-1}}{y_0} &1 &0 & \\ \hline
&0 &- \frac{y_0}{y_1} &1 & \\
&0 &0 & - \frac{y_1}{y_2} & \\
&&&&\ddots
\end{array}
\right), \quad
B =
\left(
\begin{array}{ccc|cc}
\ddots 	& & & &   \\
&- v_{-1}\frac{y_{-2}}{y_{-1}} &1 &0 & \\ \hline
&0 &-v_0 \frac{y_{-1}}{y_0} &1 & \\
&0 &0 & -v_1 \frac{y_0}{y_1} & \\
&&&&\ddots
\end{array}
\right). \nn
\eeq

We prove that rational Lax matrices~\eqref{ab} are invariant under the 2D-Toda flows following an argument similar to that of~\cite{MR1340299} for the rational reductions of the KP hierarchy. 
\begin{prop}
Given infinite matrices of the form
\beq
A = \Lambda + a, \quad
B = 1 + b \Lambda^{-1}
\eeq
for diagonal matrices $a$, $b$, the equations 
\bes \label{abflows}
\begin{align}
A_{s^{(1)}_i} = ((A B^{-1})^i)_+ A - A ( (B^{-1} A)^i )_+,
\quad
B_{s^{(1)}_i} = ((A B^{-1})^i)_+ B - B ( (B^{-1} A)^i )_+, \label{at}\\
A_{s^{(2)}_i} = ((B A^{-1})^i)_- A - A ( (A^{-1} B)^i )_-,
\quad
B_{s^{(2)}_i} = ((B A^{-1})^i)_- B - B ( (A^{-1} B)^i )_- 
\end{align}
\ees
are well-defined and induce the 2D-Toda equations~\eqref{2dtlax} on the Lax operators~\eqref{ab}.
\end{prop}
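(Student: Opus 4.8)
The plan is to adapt to the two-dimensional setting the standard mechanism for rational reductions of KP used in \cite{MR1340299}. Two things have to be checked: (i) that the right-hand sides of \eqref{abflows} are, respectively, diagonal matrices (for the $A$-equations) and matrices of the form $d\,\L^{-1}$ with $d$ diagonal (for the $B$-equations), so that the evolution stays within the ansatz $A=\L+a$, $B=1+b\L^{-1}$ and the flows are well-defined; and (ii) that, granting (i), the factorizations $L_1=AB^{-1}$, $L_2=BA^{-1}$ of Proposition~\ref{prop-fact} transport \eqref{abflows} to the 2D-Toda flows \eqref{2dtlax}.

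For step (i) I would first record the band structure forced by the ansatz: taking $B^{-1}=\sum_{k\ge 0}(-b\L^{-1})^k$ and, when $a$ is invertible, the analogous upper-triangular geometric series for $A^{-1}$ (consistently with the conventions already fixed above), the matrices $L_1=AB^{-1}$ and $B^{-1}A$ have a single non-zero band above the main diagonal and are otherwise lower triangular, while $L_2=BA^{-1}$ and $A^{-1}B$ have a single non-zero band below the main diagonal and are otherwise upper triangular --- exactly the shapes of $L_1,L_2$ in \eqref{eq:2dtlax}. The algebraic input is then the collection of trivial rearrangements obtained by cancelling adjacent $B^{-1}B$ or $A A^{-1}$ factors,
\beq
L_1^i A = A(B^{-1}A)^i, \quad L_1^i B = B(B^{-1}A)^i, \quad L_2^i A = A(A^{-1}B)^i, \quad L_2^i B = B(A^{-1}B)^i .
\eeq
Splitting both sides of the first identity into their $(\cdot)_+$ and $(\cdot)_-$ parts and moving the ``wrong-triangularity'' contributions across,
\beq
((AB^{-1})^i)_+ A - A((B^{-1}A)^i)_+ = A((B^{-1}A)^i)_- - ((AB^{-1})^i)_- A ,
\eeq
the left-hand side is supported on the bands $0,\dots,i+1$ and the right-hand side on bands $\le 0$, so their common value is a diagonal matrix; this is precisely the assertion that $A_{s^{(1)}_i}$ in \eqref{at} is a legitimate variation of $a$. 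The same truncation argument applied to $L_1^i B = B(B^{-1}A)^i$ shows that the $B_{s^{(1)}_i}$ equation produces a matrix supported on the single sub-diagonal band, hence of the form $d\,\L^{-1}$; in particular the constant term $1$ of $B$ is not touched, and likewise the leading $\L$ of $A$ is preserved. The mirror argument, using $L_2$, $A^{-1}B$ and the strictly-lower truncation $(\cdot)_-$ in place of $(\cdot)_+$, settles the two $s^{(2)}_i$ equations.

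For step (ii), granting well-definedness, the induced flow on $L_1=AB^{-1}$ follows from the Leibniz rule $\de_{s^{(1)}_i}(AB^{-1}) = A_{s^{(1)}_i}B^{-1} - AB^{-1}B_{s^{(1)}_i}B^{-1}$: substituting \eqref{at}, using $((AB^{-1})^i)_+ = (L_1^i)_+$ and $B^{-1}B=1$, every term containing $((B^{-1}A)^i)_+$ cancels in pairs and one is left with $[(L_1^i)_+, L_1]$. The identical computation on $L_2=BA^{-1}$ yields $\de_{s^{(1)}_i}L_2 = [(L_1^i)_+, L_2]$ with the \emph{same} $A_{s^{(1)}_i}, B_{s^{(1)}_i}$ (so no extra consistency condition arises), and the two $s^{(2)}_i$ flows are handled the same way, producing $\de_{s^{(2)}_i}L_j = [(L_2^i)_-, L_j]$ for $j=1,2$; these are exactly \eqref{2dtlax}. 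Throughout one repeatedly uses the bi-infinite relation $\L^{-1}\L = \L\L^{-1} = 1$ --- as already exploited in Proposition~\ref{prop-fact} --- together with associativity of the products of the banded matrices involved and the fact that the geometric-series expressions are genuine two-sided inverses in this setting. The only step that requires real care, and hence the main obstacle, is the band bookkeeping in (i): it is what forces the flows to respect the ansatz, and it is exactly here that the bi-infinite case behaves better than the semi-infinite one (where $\L^{-1}\L=1$ fails and the inverses must be treated more delicately).
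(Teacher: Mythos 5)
Your argument is correct and is essentially the paper's: the well-definedness is established exactly as in the paper's proof, by rewriting the right-hand side via the intertwining identities (e.g.\ $(AB^{-1})^iA=A(B^{-1}A)^i$) and comparing the triangularity of the two expressions to conclude the variation is diagonal (resp.\ supported on the $\L^{-1}$ band). You simply carry out explicitly the band bookkeeping for all four equations and the Leibniz computation giving $\de_{s^{(k)}_i}L_j=[(L_k^i)_\pm,L_j]$, both of which the paper states only for the first equation and leaves implicit otherwise.
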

\begin{proof}
Let us check the first equation in ~\eqref{at}. Clearly the right-hand side is upper triangular. Rewriting it as 
\[
A ((B^{-1} A)^i)_- - ((A B^{-1})^i)_- A
\]
one concludes that it is actually diagonal, hence the equation is well-defined.
\end{proof}
\subsection{Hamiltonian formalism and dispersionless limit}
The bi-infinite Toeplitz flows can be cast in Hamiltonian form 
\bea
 \frac{\de x_n}{\de
s^{(k)}_i}=(1-x_ny_n)\frac{\de H^{(k)}_i}{\de y_n} ,  &~~~~~&
\frac{\de y_n}{\de s^{(k)}_i}=-(1-x_ny_n)\frac{\de
H^{(k)}_i}{\de x_n} % \\ 
%\frac{\de x_n}{\de
%s^{(2)}_i}=(1-x_ny_n)\frac{\de H^{(2)}_i}{\de y_n}  &~~~~~&
%\frac{\de y_n}{\de s^{(2)}_i}=-(1-x_ny_n)\frac{\de
%H^{(2)}_i}{\de x_n}
\label{eq:lateom}
\eea
where the Hamiltonians
\beq
 H^{(k)}_i:=-
\frac{1}{i}\tr~{ L} _k^i,~~i=1,2,3,...,~~k=1,2
\eeq
mutually commute with respect to the symplectic structure
\beq \label{symplec}
\omega := \sum_{k\in\bbZ}
\frac{dx_k \wedge dy_k}{1-x_ky_k} .
\eeq

The first equations of the hierarchy, i.e. the Ablowitz-Ladik system~\eqref{eq:AL}, correspond to the combination of Hamiltonians
\beq
 H_{AL}:=\frac12 (H_1^{(1)}+H_1^{(2)})=-\frac{1}{2}\tr~ \l(L_1+L_2\r)=\frac{1}{2}\sum_{i\in\bbZ}
\l( x_{i+1}y_i+x_{i}y_{i+1}\r) .\nn
\eeq

To conclude our treatment of the Toeplitz lattice as rational 2D-Toda reduction we prove that 
\begin{prop}
The Hamilton equations~\eqref{eq:lateom} induce on the matrices~\eqref{aby} the flows~\eqref{abflows}.
\end{prop}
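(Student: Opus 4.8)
The plan is to verify that the Hamilton equations \eqref{eq:lateom}, written purely in terms of the variables $x_n$, $y_n$ (and $v_n = 1-x_n y_n$), are compatible with the substitution $A = -\frac{1}{y^+}(1-\Lambda)y$, $B = \frac{1}{y}(1-v\Lambda^{-1})y$, i.e. that the time derivatives of $A$ and $B$ induced by the Hamilton equations coincide with the right-hand sides of \eqref{abflows}. Since the previous proposition already shows that the flows \eqref{abflows} on $A$, $B$ induce the 2D-Toda flows \eqref{2dtlax} on $L_1 = AB^{-1}$, $L_2 = BA^{-1}$, and since the Hamiltonian structure on $x$, $y$ was set up precisely so that $H^{(k)}_i = -\frac{1}{i}\tr L_k^i$ generate the Toda flows on $L_1$, $L_2$, the content here is a bookkeeping identity: the Hamiltonian flows and the flows \eqref{abflows} descend to the same evolution of $L_1$, $L_2$, and one must check they also agree one level up, on $A$ and $B$ themselves. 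A clean way to organize this is to observe that $A$ and $B$ are (conjugates of) bi-diagonal matrices whose entries are $-y_{n}/y_{n+1}$ and $-v_n y_{n-1}/y_n$ respectively, so that $A$, $B$ together are completely determined by the ratios $y_{n+1}/y_n$ and the products $v_n = 1-x_n y_n$; hence it suffices to match the evolution of these two families of scalars.

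First I would compute $\partial_{s}\log(y_{n+1}/y_n)$ and $\partial_s v_n$ directly from \eqref{eq:lateom}, for $s = s^{(k)}_i$, getting expressions in terms of the variational derivatives $\delta H^{(k)}_i/\delta x_n$ and $\delta H^{(k)}_i/\delta y_n$. Next I would compute the same quantities from the flows \eqref{abflows}: writing $A = \Lambda + a$ with $a_n = -y_n/y_{n+1}$ and $B = 1 + b\Lambda^{-1}$ with $b_n = -v_n y_{n-1}/y_n$, the diagonal right-hand sides of \eqref{abflows} give $\partial_s a_n$ and $\partial_s b_n$ as traces of commutators of powers of $L_1$, $L_2$ against $A$, $B$, which can be expanded using the factorization $L_1 = AB^{-1}$, $L_2 = BA^{-1}$. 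The key algebraic input is the standard identification, valid for Toda Lax matrices, of the residue-type coefficients $\mathrm{res}\,(L_k^i \Lambda^{-1})$ and the diagonal/subdiagonal entries of $L_k^i$ with the variational derivatives of $\tr L_k^i$ with respect to the dressing data — here concretely $\delta(\tr L_k^i)/\delta x_n$ and $\delta(\tr L_k^i)/\delta y_n$ come out as specific matrix entries of $L_k^i$ conjugated appropriately. Matching the two computations of $\partial_s a_n$ and $\partial_s b_n$ then reduces to an identity among entries of $L_k^i$, which one proves by a short manipulation using $[L_k^i, \cdot]$ and the explicit bi-diagonal shape of $A$, $B$.

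The main obstacle is the careful treatment of the variational derivatives of the traces $\tr L_k^i$ and their translation into matrix entries, together with the non-commutativity subtleties of bi-infinite matrices flagged in the footnotes (associativity and $\Lambda^{-1}\Lambda = 1$ being used): one must make sure that every manipulation — in particular passing a power of $L_k = AB^{-1}$ or $BA^{-1}$ through the bi-diagonal $A$ or $B$ — stays within the class of matrices for which these operations are legitimate. A convenient technical device to sidestep some of this is to use the gauge transformation by $\ell$ from the Lemma, so that $\ell^{-1}L_1\ell$ becomes genuinely lower-triangular-plus-superdiagonal and the trace computations are transparent; the evolution of $\ell$ is fixed by \eqref{h-rel} and is itself a consequence of the $v_n$-evolution already computed. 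Once the entry identity is in place the rest is routine, so I would allocate most of the writing to that single commutator identity and state the remaining substitutions as a direct check.
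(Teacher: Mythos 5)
Your plan is essentially the paper's own proof: the paper likewise reduces each equation in \eqref{abflows} to a scalar identity for the diagonal entries (it writes out $\partial_{s_i^{(1)}}(y_n/y_{n+1})$ explicitly, computes $\de H^{(1)}_i/\de x_n = (y^-\Lambda^{-1}B^{-1}L_1^i)_{n,n}$ from $L_1=AB^{-1}$, and matches the two sides via the identity $-\tfrac{vy^-}{y}\Lambda^{-1}=B-1$, leaving the remaining equations as analogous exercises). Your identification of the crux — expressing the variational derivatives of $\tr L_k^i$ as matrix entries and matching against the diagonal right-hand sides of \eqref{abflows} — is exactly the content of that computation, so the approach is the same.
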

\begin{proof}
As noted in previous proof, both sides of the first equation in~\eqref{at} are diagonal, hence it can be written
\beq \label{hpf1}
\frac{\de}{\de s_i^{(1)}}\l(\frac{y_n}{y_{n+1}} \r) =
\frac{y_n}{y_{n+1}} \l( (A B^{-1} )^i - (B^{-1} A)^i \r)_{n,n} . 
\eeq
On the other hand the Hamilton equations~\eqref{eq:lateom} give
\beq \label{hpf2}
\frac{\de}{\de s_i^{(1)}}\l(\frac{y_n}{y_{n+1}} \r) = 
\frac{y_n}{y_{n+1}} \l( -\frac{v_n}{y_n} \frac{\de H_i^{(1)}}{\de x_n}
+ \frac{v_{n+1}}{y_{n+1}} \frac{\de H_i^{(1)}}{\de x_{n+1}} \r) .
\eeq
We need to check that~\eqref{hpf2} implies~\eqref{hpf1}. Using the fact that $L_1 = A B^{-1}$ and that $A$ does not depend on $x_n$ we can  compute
\[
\frac{\de H_i^{(1)}}{\de x_n} =  (y^- \L^{-1} B^{-1} L_1^i )_{n,n}
\]
and
\beq \label{hpf3}
 -\frac{v_n}{y_n} \frac{\de H_i^{(1)}}{\de x_n} 
 =-\l( \frac{v y^-}{y\ } \L^{-1} B^{-1} L_1^i \r)_{n,n}
 = \l( (AB^{-1})^i\r)_{n,n} -\l(B^{-1}(AB^{-1})^i\r)_{n,n},
\eeq
where, in the last equality, we have used the identity
\[
-\frac{v y^-}{y\ } \L^{-1}=B-1
\]
which follows from~\eqref{aby}.
Similarly
\begin{align}
\frac{v_{n+1}}{y_{n+1}} \frac{\de H_i^{(1)}}{\de x_{n+1}} &=
\l( \frac{v y^-}{y\ } \L^{-1} B^{-1} L_1^i \r)_{n+1,n+1}\nn \\
&=\l( B^{-1} L_1^i \frac{v y^-}{y\ } \L^{-1}\r)_{n,n} \label{hpf4}\\
&= - \l( (B^{-1}A)^i\r)_{n,n} +\l(B^{-1}(AB^{-1})^i\r)_{n,n} .\nn
\end{align}
Substituting~\eqref{hpf3} and~\eqref{hpf4} in~\eqref{hpf2} we conclude. The rest of the equations~\eqref{abflows} are obtained from the Hamilton equations with analogous computations which we leave as an exercise to the reader. 
\end{proof}

\begin{rem} \label{assoc}
We already mentioned that the multiplication might not be associative in the case of infinite matrices. 
Indeed, from the factorization~\eqref{ab} one is tempted to conclude that $L_1 L_2 = (A B^{-1}) (B A^{-1}) $ equals, assuming associativity, to $A (B^{-1} B) A^{-1} =1$. However it is easy to check from~\eqref{L12-m} that $L_1 L_2 \not= 1 \not= L_2 L_1$. A version of this constraint will nonetheless work in semi-infinite case.
\end{rem}

In view of the last remark, it is worth to point out that the semi-infinite
Toeplitz lattice departs slightly from the bi-infinite case, and it is
interesting to investigate on its own especially in view of its connection to
unitary matrix models \cite{MR1794352}. We give the
details of this case in Appendix \ref{sec:semiinf}. \\

Following \cite{MR2462355}, we introduce the pair of variables
\begin{equation}\label{eq:vw1}
\begin{split}
w &= \log \l(1-x y\r), \\
v &= \frac{1}{2}\l(1-\Lambda^{-1}\r)(\log x-\log y).
\end{split}
\end{equation}
With this choice of dependent variables, the Lax matrices can be rewritten as 
\begin{equation}\label{eq:laxvw}
\begin{split}
L_1 &=  \Lambda \re^{-w} \l(1- \sqrt{1-\re^w} \frac{1}{1-\re^{v+w}\Lambda^{-1}} 
\sqrt{1-\re^w}\r) ,\\
L_2 &= \l(1- \sqrt{1-\re^w} \frac{1}{1-\Lambda \re^{-v}}
\sqrt{1-\re^w}\r) \Lambda^{-1} .
\end{split}
\end{equation}

In this case
\[
L_1 = A B^{-1}, \quad L_2 = B A^{-1} 
\]
for
\[
A = - \frac{\re^{v^+}}{\sqrt{1-\re^{w^+}}}
\left( 1- \re^{-v^+} \L \right), \quad
B = \frac1{\sqrt{1-\re^w}}
\left( 1- \re^{v+w} \L^{-1} \right) .
\]

These matrices can be alternatively seen as formal difference operators acting on the real line and, correspondingly, the dependent variables $v$ and $w$ as functions of a space variable $x$ (not to be confused with the dependent variable denoted above with the same symbol). 

This observation allows us to straightforwardly obtain the long-wave
limit of the Toeplitz lattice in Lax form. The symbols of the Lax operators~\cite{MR1346289} 
\[
\lambda_i(p) =\sigma_{L_i}(p),\quad i=1,2
\]
are given by rational functions
\beq \label{laxsymb}
\lambda_1(p)=p\l(\frac{p-\re^v}{p-\re^{v+w}}\r)=(\lambda_2(p))^{-1} ,
\eeq
hence the dispersionless Lax equations can be compactly written in terms of the Lax symbol $\lambda(p) := \lambda_1(p)$ as
\beq \label{lax-form}
\frac{\de \lambda}{\de s^{(1)}_n}=\{ \l(\lambda^n\r)_+, \lambda \}_{[L]}, \quad \frac{\de
  \lambda}{\de s^{(2)}_n}=\{ \l(\lambda^{-n}\r)_- , \lambda \}_{[L]}, \quad i=1,2 \quad, 
\eeq
where $()_\pm$ denote the projections to the analytic and principal part and the Poisson bracket $\{, \}_{[L]}$ is defined as
\beq
\{a(p, x), b(p,x)\}_{[L]}:=p \frac{\de a(p,x)}{\de p}\frac{\de b(p,x)}{\de x}-p\frac{\de a(p,x)}{\de x}\frac{\de b(p,x)}{\de p} .
\eeq

\subsection{A conformal Frobenius manifold for the AL hierarchy}

The dispersionless Lax formalism for the Toeplitz reduction of 2D-Toda paves
the way to canonically associate a Frobenius manifold with the AL
hierarchy. We denote by $M_{g;n_1,\dots,n_m}$ the Hurwitz space
\bea
M_{g;n_1, \dots,n_m} &=\big\{& (\Gamma;p_1, \dots, p_m;f): \Gamma \hbox{
  smooth projective, } \dim_{\bbC}\Gamma=1, h^{1,0}(\Gamma)=g, \nn \\
& & f:\Gamma\to \bbC\bbP^1 \in \cO_{\Gamma \setminus \{p_1, \dots, p_m \}}, e_f(p_j)=n_j\big\}/ \mathord{\sim}
\eea

where the quotient is under biholomorphic equivalence. We can view the
dispersionless Lax operator 
\beq
\lambda(p)=p+\re^v(\re^w-1)+\re^{2v+w}\frac{\re^w-1}{p-\re^{v+w}} 
\label{eq:spothom}
\eeq
as the datum of a degree 2 covering map $\lambda: \bbC\bbP^1 \to \bbC\bbP^1$
which is unramified at infinity, that is, $[(\lambda(p),
\bbC\bbP^1)]/\mathord{\sim} \in M_{0;1,1}$,  
where we pick an equivalence class
under M\"obius transformation in the form \eqref{eq:spothom}. We have in this
case
\beq
p_1=\infty, \quad p_2=\re^{v+w}.
\eeq  
By regarding \eqref{eq:spothom} as the tree-level superpotential of a topological Landau-Ginzburg model
\cites{Dubrovin:1992eu, Dubrovin:1994hc, Krichever:1992qe}, we can associate a
Frobenius structure with $M_{0;1,1}$ as follows. Let $[\Gamma,f] \in M_{g,n_1,
  \dots,n_m}$, $0<D<\sum_{i=1}^m n_i p_i $ a divisor on $\Gamma$ and $\rd \omega
\in H^{1,0}_{\bar\de}(\Gamma \setminus D)$ a meromorphic differential,
possibly with poles at $p_i$ of orders less than $n_i+1$. The pair $(M_{g;n_1,
\dots n_m},\rd\omega)$ can be endowed with the structure of a Frobenius manifold through
the Landau-Ginzburg formulae \cites{Cecotti:1992rm, Dubrovin:1994hc}
\bea
\label{eq:etalg}
\eta(\de_i, \de_j) &=& \sum \Res_{\rd \lambda=0}\l\{ \frac{\de_i \lambda(p)
  \de_j \lambda(p)}{\lambda'(p)} \frac{\rd p}{p^2}  \r\} \\
c(\de_i, \de_j, \de_k) &=& \sum \Res_{\rd \lambda=0}\l\{ \frac{\de_i \lambda(p)
  \de_j \lambda(p) \de_k \lambda(p)}{\lambda'(p) } \frac{\rd p}{p^2} \r\}
\label{eq:clg}
\eea
In order for $(M_{g;n_1,
\dots n_m},\rd\omega)$  to satisfy all axioms of a Frobenius manifold, $\rd \omega$
should fall in one of five different categories of meromorphic 1-forms, which were
characterized in detail in \cites{Dubrovin:1992eu, Dubrovin:1994hc}; such
1-differentials go under the name of {\it admissible primary
  differentials}. We refer the reader to \cite{Dubrovin:1994hc} for more
details, and concentrate on the case of $M_{0;1,1}$ in the following. \\ 
For the case of $M_{0;1,1}$, Dubrovin's classification reduces to one case:
$\rd\omega$ is the unique meromorphic third kind differential with
\beq
\Res_{p=p_1}\rd \omega=1, \quad \Res_{p=p_2} \rd \omega=-1,
\eeq 
i.e., when $p_1=\infty$,
\beq
\rd \omega= \l\{\bary{ccc}\frac{p \rd p}{p_2(p-p_2)} & \mathrm{for} & p_2 \neq
0 \\ -\frac{\rd p}{p} & \mathrm{for} & p_2 =
0\eary\r. 
\eeq
In this case, the Frobenius manifold induced by \eqref{eq:etalg}, \eqref{eq:clg}  is the one associated with the Extended
Toda hierarchy~\cite{MR2108440}, which is in turn related to the
Gromov-Witten theory of the projective line. In our case, the Toeplitz reduction of
2D-Toda \eqref{eq:hnres} binds us to take as primitive form
\beq
\rd \omega = \frac{\rd p}{p}
\eeq
which is {\it not} admissible; as a consequence, moving from 1D-Toda to AL
implies that the solution of WDVV associated
with \eqref{eq:etalg}, \eqref{eq:clg} will {\it not} satisfy all axioms of a
Frobenius manifold. With a slight abuse of language, we will sometimes
refer
to this weaker structure\footnote{A convenient name could be ``almost-Frobenius
manifold'', as the type of solution of WDVV bears many resemblances with
those considered in \cite{MR2070050}, albeit differing in one important
aspect (namely $E\neq e$). Still, as Dubrovin's ``almost-duality''
will play a different role elsewhere in the text, we will refrain from doing so.} induced on $M_{0;1,1}$ still as a ``Frobenius manifold''.
\\
 
This section is devoted to a thorough characterization of this canonical
Frobenius structure associated with the AL hierarchy. We have the following
\begin{thm}
Eq. \eqref{eq:etalg}, \eqref{eq:clg} endow the Hurwitz space $M_{0;1,1}$ with
the structure of a charge $d=1$, non-degenerate, semi-simple Frobenius manifold $\cM_{AL} := (M_{0;1,1}, e,
E, \eta, F_0)$ with a {\rm non-covariantly
constant} unit $e$. In flat co-ordinates $t_1$, $t_2$ for the metric $\eta$, the
prepotential reads
\beq
F_0=\frac{1}{2} t_2 t_1^2+\re^{t_2} t_1+\frac{1}{2} t_1^2 \log
   t_1
\label{eq:f0hom}
\eeq
whereas the unit $e$ and the Euler vector field $E$ are given as
\bea
\label{eq:ehom}
e &=& \frac{t_1 \de_{t_1}-t_2}{t_1-\re^{t_2}}, \\
%{\red = \frac{t_1 \de_{t_1}-\de_{t_2}}{t_1-\re^{t_2}} }\\
E &=& t_1 \de_{t_1}+t_2. 
%{\red = t_1 \de_{t_1}+\de_{t_2} }
\label{eq:Ehom}
\eea
\end{thm}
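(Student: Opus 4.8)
The plan is to evaluate the Landau--Ginzburg residue pairings \eqref{eq:etalg}, \eqref{eq:clg} directly, in a set of coordinates on $M_{0;1,1}$ tailored to \eqref{eq:spothom}. The decisive preliminary point is the choice of flat coordinates; I would take
\[
t_1 := \re^{v}(\re^{w}-1), \qquad t_2 := v+w ,
\]
in which \eqref{eq:spothom} collapses to $\lambda(p)=\dfrac{p\,(p+t_1-\re^{t_2})}{p-\re^{t_2}}$. From this closed form one reads off at once that $\lambda$ has a simple pole at $p=\re^{t_2}$ and grows like $p$ at $\infty$, that $\lambda'(p)=1-t_1\re^{t_2}/(p-\re^{t_2})^{2}$ has the two simple roots $u_\pm=\re^{t_2}\pm\sigma$ (with $\sigma:=\sqrt{t_1\re^{t_2}}$), distinct on the open set $t_1\neq0$, and that the canonical coordinates are $U_\pm:=\lambda(u_\pm)=\bigl(\sqrt{t_1}\pm\sqrt{\re^{t_2}}\bigr)^{2}=t_1+\re^{t_2}\pm2\sigma$. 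Hence on $\{\re^{w}\neq1\}$ the covering $\lambda$ is Morse, so the structure is semisimple there, and $\lambda''(u_\pm)=\pm2/\sigma$. For later use one records $\de_{t_2}U_\pm=u_\pm$, $\de_{t_1}U_\pm=(t_1\pm\sigma)/t_1$, and — from the tautology $\sigma^{2}=t_1\re^{t_2}$ — the simplifier $(t_1\pm\sigma)/u_\pm=\pm\sigma/\re^{t_2}$ (equivalently $u_\pm^{2}=\re^{t_2}U_\pm$).

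Next I would compute $\eta$ and $c$. Since $\rd\lambda$ vanishes only at $u_\pm$, each sum in \eqref{eq:etalg}, \eqref{eq:clg} reduces to two terms; differentiating $\lambda(u_j;U)=U_j$ and using $\lambda'(u_j)=0$ gives $\de_{U_i}\lambda|_{p=u_j}=\delta_{ij}$, so in canonical coordinates $\eta$ is diagonal and $c$ totally diagonal, with
\[
\eta_{\pm\pm}=\Res_{p=u_\pm}\frac{\rd p}{p^{2}\lambda'(p)}=\frac{1}{u_\pm^{2}\lambda''(u_\pm)}=\pm\frac{\sigma}{2u_\pm^{2}},\qquad c(\de_{U_i},\de_{U_j},\de_{U_k})=\delta_{ij}\delta_{jk}\,\eta_{ii}.
\]
Because $c_{iii}=\eta_{ii}$, the multiplication is $\de_{U_i}\bullet\de_{U_j}=\delta_{ij}\de_{U_i}$ in canonical coordinates, hence commutative, associative and semisimple. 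Transporting $\eta$ and $c$ to the frame $(\de_{t_1},\de_{t_2})$ by the chain rule, and systematically replacing $\sigma^{2}$ by $t_1\re^{t_2}$, the two diagonal entries of the metric cancel against each other and the off-diagonal one adds up to $1$, so $\eta=\bigl(\begin{smallmatrix}0&1\\1&0\end{smallmatrix}\bigr)$ — constant and nondegenerate, confirming that $t_1,t_2$ are indeed flat — while $c_{111}=t_1^{-1}$, $c_{112}=1$, $c_{122}=\re^{t_2}$, $c_{222}=\re^{t_2}t_1$. These are precisely $\de_{t_i}\de_{t_j}\de_{t_k}F_0$ for $F_0$ as in \eqref{eq:f0hom} (the residual quadratic ambiguity being fixed by the normalisation there), so $c$ is potential with that prepotential.

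It remains to identify $e$ and $E$ and to check conformality. The identity of $\bullet$ is the vector field $e=e^{i}\de_{t_i}$ with $e^{i}c_{ijk}=\eta_{jk}$; solving this two-by-two system with the $c_{ijk}$ above gives $e=(t_1\de_{t_1}-\de_{t_2})/(t_1-\re^{t_2})$, i.e. \eqref{eq:ehom}, whose components in the flat coordinates are non-constant, so $\nabla e\neq0$: the unit is not covariantly constant. The Euler field is the vector field under which $\lambda$ is homogeneous of degree one with $p$ assigned degree one, equivalently $\sum_{\pm}U_\pm\de_{U_\pm}$; since $\re^{v}$ has weight $1$ and $\re^{w}$ weight $0$ for this scaling, $E t_1=t_1$ and $E t_2=1$, so $E=t_1\de_{t_1}+\de_{t_2}$, which is \eqref{eq:Ehom}. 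It is linear, and a short computation gives $\mathrm{Lie}_{E}F_0=2F_0+t_1^{2}$ (quadratic remainder) and $\mathrm{Lie}_{E}\eta=\eta$, hence $\mathrm{Lie}_{E}\bullet=\bullet$ and the structure is conformal of charge $d=1$. Together with the nondegeneracy and semisimplicity recorded above, this yields the theorem.

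The step I expect to be the real obstacle is not any of these calculations but the first one — finding the flat coordinates. Because $\rd\omega=\rd p/p$ is \emph{not} an admissible primary differential in Dubrovin's classification, the Hurwitz-space dictionary does not hand them over, and the obvious guesses (the pole location $\re^{v+w}$, or $v$ alone) fail to flatten $\eta$. One has to integrate $\nabla\,\rd t=0$ by hand; this two-dimensional problem is made tractable by noting that in the coordinates $(v,\mu)$ with $\mu:=\bigl(\sqrt{\re^{w}}+\sqrt{\re^{w}-1}\bigr)^{2}$ the Euler field is simply $\de_{v}$, forcing $t_1=\re^{v}\phi(\mu)$, $t_2=v+\psi(\mu)$ and reducing the flatness of $\eta$ to ODEs with solution $\phi(\mu)=\tfrac14(\mu-1)^{2}/\mu=\re^{w}-1$ and $\psi'(\mu)=(\mu-1)/(\mu(\mu+1))$, i.e. $\psi=w$ up to an additive constant. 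Once that identification is secured, the tautology $\sigma^{2}=t_1\re^{t_2}$ makes the residue computations collapse and the remainder is bookkeeping.
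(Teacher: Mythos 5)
Your proof is correct and follows essentially the same route as the paper: the same flat coordinates $t_1=\re^v(\re^w-1)$, $t_2=v+w$, the same canonical coordinates $U_\pm=(\re^{t_2/2}\pm\sqrt{t_1})^2$ from the critical values of $\lambda$, and the same identifications $e=\de_{U_+}+\de_{U_-}$ (which you recover by solving $e^ic_{ijk}=\eta_{jk}$) and $E=\sum U_\pm\de_{U_\pm}=t_1\de_{t_1}+\de_{t_2}$. The only point of divergence is that the paper's ``non-degeneracy'' refers to the conformal structure, verified there via $[e,E]=e$, rather than to the invertibility of $\eta$ which you check instead; that remaining identity is immediate from your formulas for $e$ and $E$.
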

\pf The proof follows from a straightforward calculation from
\eqref{eq:etalg}, \eqref{eq:clg}. We reproduce here the main steps. 

It is immediate to check that the metric $\eta$ is flat. Introducing
co-ordinates $(t_1,t_2)$ such that
\beq
\re^v=\re^{t_2}-t_1, \quad w=t_2-\log(\re^{t_2}-t_1),
\label{eq:vwt1t2}
\eeq
%{\red $v = \log (\re^{t_2} -t_1 )$ and $w= t_2 - \log(\re^{t_2} -t_1 )$ }
%
the metric $\eta$ takes the off-diagonal form
\beq
\eta\l(\de_{t_i}, \de_{t_j}\r)=\delta_{i+j,3}.
\eeq
In this co-ordinates, the Landau-Ginzburg formula \eqref{eq:clg} for the structure constants yields the
expression \eqref{eq:f0hom} for the prepotential. \\ 

As for the usual theory of Frobenius structures on Hurwitz spaces, the
critical values of the superpotential give a set of canonical co-ordinates of the
Frobenius manifold. Denoting by $q_{1,2}$ the critical points of $\lambda(p)$,
\bea
q_1 &=& \re^{\frac{t_2}{2}}\l(\re^{\frac{t_2}{2}}+\sqrt{t_1}\r), \nn \\
q_2 &=& \re^{\frac{t_2}{2}}\l(\re^{\frac{t_2}{2}}-\sqrt{t_1}\r),
\eea
canonical co-ordinates are given as
\bea
u_1 &=& \lambda(q_1)=\l(\re^{\frac{t_2}{2}}+\sqrt{t_1}\r)^2,\nn \\
u_2 &=& \lambda(q_2)=\l(\re^{\frac{t_2}{2}}-\sqrt{t_1}\r)^2,
\label{eq:cancoords}
\eea
and it is straightforward to check that the corresponding vector fields give
idempotents of the algebra \eqref{eq:clg}
\beq
\de_\g u c^\g_{\a\b} = \de_\a u \de_\g u.
\eeq
In particular, the Frobenius algebra induced on the tangent bundle of $M_{0;1,1}$ is generically semi-simple. \\

With this ingredients at hand, we can readily determine the expression for the
unit $e$ and the Euler vector field $E$. By definition, we have
\beq
e=\de_{u_1}+ \de_{u_2}
\eeq
and \eqref{eq:cancoords} implies \eqref{eq:ehom}. On the other hand, we know
that in the usual theory of Frobenius manifolds associated with Hurwitz spaces, the vector
\beq
E:=\sum_{i=1,2}u_i \de_{u_i}
\label{eq:Egen}
\eeq
is the Euler vector field of the Frobenius manifold. For the case at hand,
\eqref{eq:Egen} becomes, in flat co-ordinates
\beq
E=t_1 \de_{t_1}+ \de_{t_2}.
\eeq
This is indeed the Euler vector field for the solution of WDVV
\eqref{eq:f0hom}. Up to quadratic terms, we have explicitly
\beq
L_E F_0 = 2 F_0 = (3-1) F_0,
\eeq
namely, the Frobenius structure is quasi-homogeneous, with charge
$d=1$. Its non-degeneracy 
\beq
[e,E]=e
\eeq
follows trivially from \eqref{eq:ehom}, \eqref{eq:Ehom}.
\qed
\\
\begin{rmk}
As compared to the classical definition of a Frobenius manifold, we see that
the axiom of covariant constancy of the unit vector field with respect to
the Levi-Civita connection of $\eta$
\beq
\nabla e = 0
\eeq
is violated by \eqref{eq:f0hom}. In particular
\beq
\de_{\a}\de_{\b} L_e F_0 \neq \eta_{\a\b}
\eeq
Somewhat remarkably, though, the grading axiom, which states that the Euler vector field is linear
\beq
\nabla\nabla E = 0
\eeq
is instead respected, as is manifest from \eqref{eq:Ehom}. 
\end{rmk}

\subsection{Bi-Hamiltonian structure}

Denote by $\{,\}_i$, $i=1,2$ the Poisson brackets of hydrodynamic type on the loop space $\mathcal{L} (\mathcal{M}_{AL})$ associated with the metric $\eta$ and the intersection form $g$ respectively. Recall that the intersection form is the bilinear pairing on the cotangent bundle $T^* \mathcal{M}_{AL}$ defined by
\beq \label{interformg}
g(w_1, w_2) := i_E (w_1 \cdot w_2 )
\eeq
where the product of the $1$-forms $w_i$ is induced on $T^* \mathcal{M}_{AL}$ by the Frobenius algebra on the tangent by the map $\eta^{}: T \mathcal{M}_{AL} \rightarrow T^* \mathcal{M}_{AL}$.

It is a general result of the theory of Frobenius manifolds that the contravariant metrics $\eta$ and $g$ form a flat pencil; this in particular implies that the associated Poisson brackets $\{,\}_i$ are compatible. In the present case the compatibility is confirmed by a straightforward computation.

In flat coordinates the Poisson brackets are given by
\[
\{t_1(x),t_2(y)\}_1 = \delta'(x-y), \quad
\]
with the other entries equal to zero, and
\begin{subequations}
\begin{align}
&\{ t_1 (x) , t_1 (y) \}_2 = 2 t_1 \re^{t_2} \delta'(x-y) 
+ (t_1 \re^{t_2})' \delta(x-y), \\
&\{ t_1(x) , t_2(y) \}_2 = (t_1+\re^{t_2})\delta'(x-y) 
+ (t_1 + \re^{t_2})' \delta(x-y), \\
&\{ t_2(x) , t_2(y) \}_2 = 2 \delta'(x-y) .
\end{align}
\end{subequations}

\begin{rmk}
The Poisson pencil $\{,\}_1 + z \{,\}_2$ is not exact: it can be easily proved that there is no vector field $X$ such that 
\begin{equation} \label{exactness}
\mathrm{Lie}_X \{,\}_2 = \{,\}_1, \quad
\mathrm{Lie}_X \{,\}_1 =0 . 
\end{equation}
This fact is a direct consequence of dropping the axiom of flatness of $e$:
indeed, all Poisson pencils associated with Frobenius manifolds with flat unit are exact; in such a case a vector field $X$ such that~\eqref{exactness} holds is given by the unit $e$.
\end{rmk}

\begin{rmk}
Note that we do not claim any relation of these Poisson structures with the Poisson structures of 2D-Toda~\cite{GC} or with the symplectic form~\eqref{symplec} of the Toeplitz lattice. It would be interesting to obtain the Poisson pencil presented here as a reduction of the 2D-Toda Poisson pencil, or to obtain dispersive counterparts of the Poisson brackets $\{,\}_i$. 
%Such possibilities are, however, not clear.
%However, comparing the relationships these Poisson brackets establish between the Toda Hamiltonians and the AL flows (see below) with e.g.~\eqref{eq:lateom}, the possibility of such reduction is not clear. 
%An analogue procedure in the dispersive case could lead to a bi-Hamiltonian formulation of the full AL hierarchy. 
\end{rmk}

Considering the last remark is somehow unexpected that the 2D-Toda Hamiltonians and the Poisson brackets given above provide the correct flows. Denote
\beq
\HH^{(i)}_n = \int h^{(i)}_n(v,w) d x, 
\eeq
where $h^{(i)}_n$ for $n\geq1$, $i=1,2$ are the dispersionless Hamiltonian densities obtained by restriction of the dispersionless 2D-Toda Hamiltonian densities to the submanifold of symbols of the form~\eqref{laxsymb}, i.e.
\beq \label{eq:hnres}
h^{(1)}_n = - \Res_{p=\infty} \frac{\lambda^n}{n} \frac{dp}p, \qquad
h^{(2)}_n = \Res_{p=0} \frac{\lambda^{-n}}{n} \frac{dp}p .
\eeq

These densities can be written in closed form in terms of hypergeometric functions. We have
\bea
h^{(1)}_n &=& -\Res_{p=\infty}\frac{\lambda^n}{n} \frac{\rd p}{p}=
-\Res_{p=\infty}\frac{1}{n}\l(p\frac{p-\re^v}{p-\re^{v+w}}\r)^n \frac{\rd p}{
  p} =  \frac{\re^{nv}}{n \ n!}\frac{\rd^n}{\rd x^n}\l(\frac{1-x}{1-x\re^w}\r)^n\Bigg|_{x=0}
\nn \\
&=& \frac{\re^{nv}}{n \ n!} \sum_{k=0}^n\l(\bary{c}n\\k\eary \r)\frac{\rd^{n-k}}{\rd x^{n-k}}(1-x)^n
\frac{\rd^k}{\rd x^k}(1-x \re^w)^{-n}\Bigg|_{x=0} \nn \\
&=&  \frac{(-1)^{n} \re^{nv}}{n}\sum_{k=0}^n\l(\bary{c}n\\k\eary
\r)\l(\bary{c}n+k-1\\k\eary \r) (-\re^w)^k \nn \\
&=&  \frac{(-1)^{n} \re^{nv}}{n} \,
_2F_1\left(-n,n;1;\re^w\right) \nn 
\eea 
and, by a similar computation,
\[
h^{(2)}_n = \frac{(-1)^n \re^{-nv}}{n}  \ _2 F_1(n,-n,1;\re^w).
\]

As an example, the dispersionless Ablowitz-Ladik Hamiltonian reads
\beq
\HH_{AL}:=-\frac{1}{2}\int \l(h^{(1)}_1(v,w)+h^{(2)}_{1}(v,w)\r)\rd x =\int (1-\re^w) \cosh v
\ \rd x .
\eeq

We have:
\begin{prop}
The dispersionless AL flows~\eqref{lax-form} admit the following Hamiltonian formulation
\[
\frac{\partial}{\partial s^{(1)}_n} \cdot = \{ \cdot , \HH^{(1)}_{n+1} \}_1 \nn , \quad
\frac{\partial}{\partial s^{(2)}_n} \cdot = \{ \cdot, \HH^{(2)}_n \}_1 \nn ,
\]
for $n>0$.
\end{prop}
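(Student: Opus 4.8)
The plan is to verify the claimed Hamiltonian formulation by reducing the statement to the known dispersionless Lax formalism of the 2D-Toda hierarchy and then restricting to the Toeplitz submanifold cut out by the rational Lax symbol~\eqref{laxsymb}. Since the Hamiltonian densities $h^{(i)}_n$ are defined in~\eqref{eq:hnres} precisely as the restrictions of the dispersionless 2D-Toda densities to the locus of symbols of the form~\eqref{laxsymb}, the first step is to recall that, on the full 2D-Toda side, the flows~\eqref{lax-form} are generated by these densities with respect to a Poisson bracket of hydrodynamic type built from the appropriate metric; what must then be checked is that this structure is compatible with the Toeplitz reduction and that the reduced bracket coincides with $\{,\}_1$ as written above in the flat coordinates $(t_1,t_2)$.

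Concretely, I would proceed as follows. First, using the explicit closed forms of $h^{(1)}_n$ and $h^{(2)}_n$ in terms of the hypergeometric functions ${}_2F_1(\mp n,\pm n;1;\re^w)$, rewrite everything in the flat coordinates $(t_1,t_2)$ via the change of variables~\eqref{eq:vwt1t2}, namely $\re^v = \re^{t_2} - t_1$ and $w = t_2 - \log(\re^{t_2}-t_1)$. Second, compute the variational derivatives $\delta \HH^{(i)}_n/\delta t_j$ and apply the first Poisson bracket, which in flat coordinates is simply $\{t_1(x),t_2(y)\}_1 = \delta'(x-y)$ with the other entries zero; this bracket acts on a functional $\HH = \int h\, dx$ by $\partial_{s}\, t_1 = \partial_x(\delta h/\delta t_1)$ and $\partial_s\, t_2 = \partial_x(\delta h/\delta t_2)$. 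Third, compare the resulting evolution equations for $(t_1,t_2)$ with the dispersionless Lax equations~\eqref{lax-form} expressed in the same coordinates, using the idempotent/canonical-coordinate description~\eqref{eq:cancoords} together with the standard fact that in canonical coordinates a Hamiltonian flow of hydrodynamic type for the metric $\eta$ takes the Riemann-invariant form $\partial_{s} u_i = (\partial_{u_i}\HH/\eta_{ii})\,\partial_x u_i$ — equivalently, that the residue formulae~\eqref{eq:hnres} produce exactly the coefficients appearing in~\eqref{lax-form} once one contour-deforms the residue over $\rd\lambda = 0$. It suffices to check the lowest flows ($n=1$), which are the Ablowitz--Ladik system itself, and then invoke the recursive/bi-Hamiltonian structure of the hierarchy to propagate the identification to all $n>0$; alternatively one checks the generating-function identity for all $n$ at once by manipulating the residue in~\eqref{eq:hnres}.

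The step I expect to be the main obstacle is the bookkeeping in matching the two residue presentations: showing directly that the $x$-derivative of $\delta \HH^{(i)}_n/\delta t_j$ equals the $(\lambda^{\pm n})_\pm$-bracket in~\eqref{lax-form}. The cleanest route around this is to avoid the hypergeometric expressions and instead work with the generating function $\lambda(p)$ directly: express $h^{(1)}_n = -\Res_{p=\infty}(\lambda^n/n)(dp/p)$, differentiate under the residue using $\partial_{t_j}\lambda$ (which one reads off from~\eqref{eq:spothom}), integrate by parts in $p$ to move the derivative onto $\lambda^{n-1}\lambda'$, and recognize the result as the variational derivative paired against the Landau--Ginzburg metric~\eqref{eq:etalg}. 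Then the identity $\{f,\HH^{(1)}_{n+1}\}_1 = \{(\lambda^{n})_+,\lambda\}_{[L]}$ becomes the statement that the hydrodynamic bracket for $\eta$ in the Landau--Ginzburg/Hurwitz-space setting reproduces the Lax-form flows, which is exactly Dubrovin's theorem on Frobenius structures on Hurwitz spaces applied to $\cM_{AL}$ — keeping in mind that the non-flatness of the unit $e$ plays no role here, since $\{,\}_1$ and the densities are defined independently of $e$. One subtlety to flag: the asymmetric indexing ($\HH^{(1)}_{n+1}$ versus $\HH^{(2)}_n$) reflects the shift by the $(\lambda^0)$-term, i.e. the fact that $h^{(1)}_1$ and $h^{(2)}_1$ are not independent of the flat time $x = t^{1,0}$ in the same way; this must be tracked carefully when reading off which Hamiltonian generates which flow, and is most transparent in the $(t_1,t_2)$ coordinates where the $\{,\}_1$-bracket is constant-coefficient.
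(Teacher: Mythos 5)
The paper states this proposition without proof, so there is nothing to compare against line by line; your plan --- direct verification that the $\{,\}_1$-flows of the residue Hamiltonians \eqref{eq:hnres} reproduce the dispersionless Lax equations \eqref{lax-form} --- is the natural one and would work in outline. Three points need tightening, however. First, a small but real slip: with $\{t_1(x),t_2(y)\}_1=\delta'(x-y)$ and the off-diagonal metric $\eta_{ij}=\delta_{i+j,3}$, the bracket acts by $\partial_s t_1=\partial_x(\delta h/\delta t_2)$ and $\partial_s t_2=\partial_x(\delta h/\delta t_1)$, i.e.\ the indices are swapped relative to what you wrote. Second, you cannot simply ``recognize the result as Dubrovin's theorem on Frobenius structures on Hurwitz spaces'': that theorem is established for \emph{admissible} primary differentials, and the whole point of this section is that $\rd\omega=\rd p/p$ is not admissible (the unit is not flat, the pencil is not exact) --- which is precisely why the paper flags the relation between the 2D-Toda Hamiltonians and the restricted Lax flows as ``somewhat surprising.'' The identity between $\{\cdot,\HH^{(1)}_{n+1}\}_1$ and $\{(\lambda^n)_+,\lambda\}_{[L]}$ therefore has to be checked by hand, e.g.\ by the residue manipulation you sketch (differentiate \eqref{eq:hnres} under the residue, integrate by parts in $p$, and compare characteristic velocities in the canonical coordinates \eqref{eq:cancoords}, where both flows are diagonal); as written, this central identity is asserted rather than carried out. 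Third, the fallback of checking $n=1$ and ``propagating by the bi-Hamiltonian recursion'' is logically delicate: the recursion relations are themselves the next proposition of the paper, and even granting them you would additionally need to know that the Lax flows \eqref{lax-form} satisfy a matching recursion in $n$; the uniform generating-function computation is the safer route. None of these is a fatal obstruction, but the proposal remains a plan rather than a proof.
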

Note the somewhat surprising relation of the 2D-Toda Hamiltonians with the restrictions of the 2D-Toda Lax flows. Even more surprisingly:
\begin{prop}
The Hamiltonians $\HH^{(i)}_n$ satisfy the following recursion relations for $n>0$
\bea
\{ \cdot , \HH^{(1)}_n \}_2 = \{ \cdot , \HH^{(1)}_{n+1} \}_1, \label{bhr1}  \\
\{ \cdot , \HH^{(2)}_{n+1} \}_2 = \{ \cdot , \HH^{(2)}_n \}_1 . \label{bhr2} 
\eea
\end{prop}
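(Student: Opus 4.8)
The plan is to reduce both recursions to a single pointwise (finite–dimensional) identity on $\cM_{AL}$ and then to extract that identity from the flat–pencil structure of $(\eta,g)$ together with the residue representation \eqref{eq:hnres} of the densities. Since each $\HH^{(i)}_n=\int h^{(i)}_n\,\rd x$ is a Hamiltonian of hydrodynamic type — its density depends on the jet variables only through $(t_1,t_2)$ — the evolutionary vector field generated by $\HH=\int h\,\rd x$ through $\{,\}_j$ acts on the fields as $\partial_s t^\alpha=g_{(j)}^{\alpha\beta}(\nabla_j)_\beta(\nabla_j)_\gamma h\cdot t^\gamma_x$, where $\nabla_1,\nabla_2$ are the Levi--Civita connections of $\eta$ and of the intersection form $g$. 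Hence \eqref{bhr1}--\eqref{bhr2} are equivalent to the equalities of $(1,1)$–tensors $g^{\alpha\beta}(\nabla_2)_\beta(\nabla_2)_\gamma h^{(1)}_n=\eta^{\alpha\beta}(\nabla_1)_\beta(\nabla_1)_\gamma h^{(1)}_{n+1}$ and $g^{\alpha\beta}(\nabla_2)_\beta(\nabla_2)_\gamma h^{(2)}_{n+1}=\eta^{\alpha\beta}(\nabla_1)_\beta(\nabla_1)_\gamma h^{(2)}_{n}$ on $M_{0;1,1}$; in the flat chart $(t_1,t_2)$ of $\eta$ this becomes an ordinary identity between functions of two variables.

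The first ingredient is the primitivity of the densities with respect to the Frobenius product: in the $\eta$–flat chart one has $\partial_\alpha\partial_\beta h^{(1)}_{n+1}=n\,c^\gamma_{\alpha\beta}\,\partial_\gamma h^{(1)}_n$, and, since the second family carries the opposite homogeneity weight, $\partial_\alpha\partial_\beta h^{(2)}_{n}=n\,c^\gamma_{\alpha\beta}\,\partial_\gamma h^{(2)}_{n+1}$; moreover $L_E h^{(1)}_n=n\,h^{(1)}_n$ and $L_E h^{(2)}_n=-n\,h^{(2)}_n$. All of these follow from \eqref{eq:spothom} and \eqref{eq:etalg}--\eqref{eq:clg} by elementary residue calculus: differentiating $h^{(1)}_n=-\tfrac1n\Res_{p=\infty}\lambda^n\,\tfrac{\rd p}{p}$ and using the residue theorem to replace the residue at $p=\infty$ (resp. at $p=0$ for $h^{(2)}_n$) by a sum of residues at the critical points of $\lambda$ reproduces the right–hand side of \eqref{eq:clg}; alternatively one verifies the relations directly on the closed hypergeometric expressions via the contiguity identities of $\,{}_2F_1$.

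The second ingredient is the description of the intersection–form connection. One has $g^{\alpha\beta}=i_E(\rd t^\alpha\cdot\rd t^\beta)=E^\epsilon c^{\alpha\beta}_\epsilon$, and Dubrovin's formula gives the contravariant Christoffel symbols of $g$ in the $\eta$–flat chart as $\Gamma^{\alpha\beta}_{g,\gamma}=\big(\tfrac12+\hat\mu_\alpha\big)c^{\alpha\beta}_\gamma$, with $\hat\mu=\diag(\tfrac12,-\tfrac12)$ fixed by $E=t_1\partial_{t_1}+\partial_{t_2}$ and $d=1$ (this is checked at once against the explicit $P_2$ listed above). I would then write the left–hand side as $g^{\alpha\beta}\partial_\beta\partial_\gamma h+\Gamma^{\alpha\delta}_{g,\gamma}\partial_\delta h$, substitute the primitivity relations of the previous step, and use associativity of $c$ together with the contraction identity
\[
E^\sigma c^\delta_{\beta\sigma}\,\partial_\delta h=\partial_\beta(L_E h)-(\partial_\beta E^\sigma)\,\partial_\sigma h
\]
to trade every $E$–contraction for the homogeneity weight of $h$ plus a residual term carrying $\partial_{t_1}h$, which is exactly the non–tensorial piece produced by $\nabla e\neq0$. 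Collecting terms, the identity reduces to a short linear relation among the tensors $c^{\alpha\delta}_\gamma\partial_\delta h^{(i)}_{n}$, to be verified using the numerical values of $\hat\mu$ and the coincidence of the homogeneity weight $n$ (resp. $-n$) with the shift of the index in \eqref{bhr1} (resp. \eqref{bhr2}).

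The step I expect to be the main obstacle is precisely this last matching, because the unit $e$ of $\cM_{AL}$ is not covariantly constant: the usual bookkeeping of the principal hierarchy — in which the spatial variable is a flat time and the $\theta_{\alpha,0}$ are joint Casimirs of both structures — is deformed, and one has to check by hand that the extra contributions coming from the resonance $q_2=1$, equivalently from $\nabla e\neq0$, assemble into a total $x$–derivative and hence drop out of the comparison of evolutionary vector fields, leaving the clean coefficient–one recursions \eqref{bhr1}--\eqref{bhr2}. If the conceptual argument gets mired in these terms, a completely elementary fallback is available: insert the explicit flat–coordinate forms of $\{,\}_1$, $\{,\}_2$ and of the hypergeometric densities $h^{(i)}_n$ (or argue order by order in $n$, uniformity in $n$ then following from the residue representation and the primitivity relation) and verify \eqref{bhr1}--\eqref{bhr2} by direct substitution.
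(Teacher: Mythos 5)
Your strategy is the right one, and the paper itself offers no proof to compare against: reducing \eqref{bhr1}--\eqref{bhr2} to a pointwise tensor identity on $M_{0;1,1}$, and feeding in (a) the primitivity relations of the densities, (b) the Euler weights $L_E h^{(1)}_n=nh^{(1)}_n$, $L_E h^{(2)}_n=-nh^{(2)}_n$ (both correct, since $E=\de_v$ in the $(v,w)$ chart), and (c) the expression of the second bracket through $c$ and $E$, is exactly how the statement is proved. However, two of the three explicit formulas you plug in are wrong, and with them the final ``short linear relation'' you defer to the last step does not close. First, the contravariant Christoffel symbols of $g$: computing the Levi--Civita connection of $g$ directly from its flat chart $(v,w)$ (where the Gram matrix is the constant antidiagonal) gives, in the $\eta$-flat chart, $\Gamma^{\alpha 1}_{g,\gamma}=c^{\alpha 1}_{\gamma}$ and $\Gamma^{\alpha 2}_{g,\gamma}=0$; the weight $\tfrac12+\hat\mu$ sits on the \emph{second} contravariant index, not the first. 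Your version and the correct one agree on the diagonal components and both satisfy $\Gamma^{\alpha\beta}_{g,\gamma}+\Gamma^{\beta\alpha}_{g,\gamma}=\de_\gamma g^{\alpha\beta}$, so ``checking against the listed $P_2$'' does not discriminate between them (and the paper's display of $\{t_1(x),t_2(y)\}_2$ with a total $x$-derivative in the $\delta$-coefficient actively points to the wrong choice). The difference is fatal: with $\Gamma^{21}_{g,\gamma}=0$ the $\alpha=2$ component of \eqref{bhr1} already fails at $n=1$, since $\{t_2,\HH^{(1)}_1\}_2=\Gamma^{21}_{g,\gamma}t^\gamma_x$ would vanish while $\{t_2,\HH^{(1)}_2\}_1=t^1_x+\re^{t_2}t^2_x$ (here $h^{(1)}_2=\tfrac12 t_1^2+t_1\re^{t_2}$). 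Second, your primitivity relation for the second family has the wrong coefficient: the correct statement is $\de_\alpha\de_\beta h^{(2)}_n=-(n+1)\,c^\gamma_{\alpha\beta}\de_\gamma h^{(2)}_{n+1}$, not $n\,c^\gamma_{\alpha\beta}\de_\gamma h^{(2)}_{n+1}$; one checks this at $n=1$ on $h^{(2)}_1=t_1(\re^{t_2}-t_1)^{-2}$, $h^{(2)}_2=\tfrac12(t_1^2+2t_1\re^{t_2})(\re^{t_2}-t_1)^{-4}$. (Your relation for the first family, $\de_\alpha\de_\beta h^{(1)}_{n+1}=n\,c^\gamma_{\alpha\beta}\de_\gamma h^{(1)}_n$, is correct and does follow from \eqref{eq:defflat}.)

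With these two corrections your computation closes, and more cleanly than you anticipate. Since $\de_\beta\de_\gamma h$ is of the form $c^\sigma_{\beta\gamma}N_\sigma$ for both families, WDVV gives
\[
g^{\alpha\beta}\de_\beta\de_\gamma h \;=\; E^\epsilon c^{\alpha\beta}_{\epsilon}\,\de_\beta\de_\gamma h \;=\; c^{\alpha\beta}_{\gamma}\,E^\epsilon\de_\epsilon\de_\beta h \;=\; c^{\alpha\beta}_{\gamma}\l(\de_\beta(L_E h)-\delta_\beta^{1}\de_1 h\r),
\]
and the non-tensorial piece $-c^{\alpha 1}_{\gamma}\de_1 h$ produced by $\nabla E\neq 0$ cancels \emph{pointwise} against the Christoffel contribution $\Gamma^{\alpha 1}_{g,\gamma}\de_1 h=c^{\alpha 1}_{\gamma}\de_1 h$ --- there is no total-derivative bookkeeping left to do. Both sides of \eqref{bhr1} then equal $n\,c^{\alpha\beta}_{\gamma}\de_\beta h^{(1)}_n\,t^\gamma_x$, and both sides of \eqref{bhr2} equal $-(n+1)\,c^{\alpha\beta}_{\gamma}\de_\beta h^{(2)}_{n+1}\,t^\gamma_x$; note that the matching in \eqref{bhr2} forces the coefficient $-(n+1)$ in the second primitivity relation, which is another way to see that your coefficient $n$ cannot be right. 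Your elementary fallback of direct substitution would of course expose and repair both errors, but as written the conceptual argument contains a genuine gap, not merely a hard bookkeeping step.
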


The first sequence of Hamiltonians $\HH^{(1)}_n$ is obtained by the
bi-Hamiltonian recursion~\eqref{bhr1} starting from the Casimir $ \HH^{(1)}_1
= \int t_1 \ dx$ of $\{,\}_1$. The second recursion involves the Hamiltonians
$\HH^{(2)}_n$ in a somewhat opposite order; moreover this second chain does
not contain the  Casimir $\int t_2 \ dx$, which turns out to be a Casimir of
both Poisson brackets, a phenomenon related to the resonance of the spectrum
of $\mathcal{M}_{AL}$. \\

%\begin{rmk}
As in the case of Frobenius manifolds with flat unit, one can define the
deformed flat connection $\tilde\nabla$ of $\mathcal{M}_{AL}\times\bbC^*$ and construct a
Levelt basis of deformed flat coordinates $\theta_{\alpha}(\zeta)$ which provide the Hamiltonians of the associated Principal hierarchy on the loop space $\mathcal{L} ( \mathcal{M}_{AL})$ .

They are given by
\[
\bar h_{\alpha,p} = \int \theta_{\alpha,p+1} \ dx, \quad \alpha=1,2, p\geq-1
\]
where the densities are obtained by expanding 
\[
\theta_\alpha(\zeta) = \sum_{p\geq0} \theta_{\alpha,p} \zeta^p 
\]
in the deformation parameter $\zeta$. An explicit computation shows that the generating function of the 
densities $h_n^{(1)}$ obtained in the reduction from 2D-Toda
\bea
f(\zeta) &=& \sum_{p=0}^\infty \frac{h_{p+1}^{(1)}}{p!} \zeta^{p},
\eea
has horizontal differential w.r.t. the extended deformed connection
on $\cM_{AL} \times \bbC^*$
\beq
\tilde\nabla d f{(\zeta)} = 0
\label{eq:defflat} .
\eeq
%
%
%Explicitly, we have that
%\[ 
%\frac{h_{p+1}}{p!} = - \Res_{z=\infty} \frac{\lambda^{p+1}}{(p+1)!} \frac{dz}z 
%= \frac1{(p+1)!} (t_1-\re^{t_2})^{p+1} \ _2F_1(p+1,-p-1,1; \frac{\re^{t_2}}{\re^{t_2}-t_1} ),
%\]
%and a
%
At the level of the generating function we have 
\beq
f(\zeta) = (1-\re^w) \re^{v} \Psi_2\l(1; 1,2; 
\zeta \re^{v}(1-\re^w)  ,-\re^{w+
  v}\zeta\r)
\label{eq:fpm}
\eeq
where we denoted by $\Psi_2(a;b,c;x,y)$ the generalized hypergeometric Humbert function~\cite{MR0058756}
\beq
\Psi_2(a; b,c; x, y) := \sum_{l,m=0}^\infty \frac{(a)_{l+m}}{(b)_l (c)_m}
\frac{x^l y^m}{l! m!}.
\label{eq:phi2}
\eeq
and with $(a)_n$ the Pochhammer symbol
$\Gamma(a+n)/\Gamma(a)$. The leading order in the $\zeta$-expansion of
\eqref{eq:fpm} shows that $f(\zeta)$ yields the deformed flat co-ordinate
$\theta^{1}(\zeta)=\theta_{2}(\zeta)$, hence in this case 
\[\theta_{2,p} = \frac{h^{(1)}_{p+1}}{p!}.\]

For the other co-ordinate, by
solving recursively the deformed flatness equations, we obtain 
\begin{align}
&\theta_{1,0}=t_2, \nn \\
&\theta_{1,1}=\re^{t_2} +t_1(t_2+\log t_1 -1), \nn \\
&\theta_{1,2}=\frac{t_1}4 ( 2 (2\re^{t_2}+t_1) \log t_1 + t_1 (2 t_2 -1) + 4 \re^{t_2} (t_2 -1) ) + \frac14 \re^{2 t_2}. \nn
\end{align}
From the general theory it follows that the Hamiltonians $\bar h_{\alpha,p}$ satisfy the bi-Hamiltonian recursion relations
\begin{align}
&\{ \cdot , \bar h_{1,p-1} \}_2 = 2 \{ \cdot , \bar h_{2,p-1} \}_1 + p \{ \cdot, \bar h_{1,p} \}_1, \nn \\
&\{ \cdot, \bar h_{2,p-1} \}_2 = (p+1) \{ \cdot , \bar h_{2,p} \}_1 \nn
\end{align}
for $p\geq0$. The recursion relation for the first set of Hamiltonians takes into account the resonance of spectrum of $\mathcal{M}_{AL}$ mentioned above. 

Another remarkable fact related to the non-flatness of $e$ is that the momentum functional $p=t_1 t_2$ generating the $x$-translations does not appear among the Hamiltonians densities $\theta_{\alpha, p}$ of the Principal hierarchy.
%\end{rmk}

\section{Mirror symmetry for local $\bbC\bbP^1$}
\label{sec:inhom}
\subsection[Dubrovin's almost duality and a logarithmic LG mirror]{Dubrovin's
  almost duality and a logarithmic Landau-Ginzburg mirror}

In the
light of our findings in Section \ref{sec:hom}, it is
natural to ask whether the Frobenius structure associated with the Gromov-Witten
theory of the resolved conifold has anything to do with the one in \eqref{eq:f0hom} and, if so,
whether we can learn anything new about the former from our discussion of the Toeplitz
reduction and its dispersionless limit. We now turn to answer both
questions in the affirmative. 

A key role in the discussion to follow will be played by Dubrovin's notion
\cite{MR2070050} of ``duality of (almost)-Frobenius manifolds'', which we
briefly recall here. 
%\marginpar{\small \textcolor{red}{ This part had better    move somewhere
%else.}} 
Let $\cM := (M, e, E, \eta, F_0)$ be a Frobenius manifold, with
unit $e$, Euler vector field $E$, flat invariant pairing $\eta$ and structure
constants $c_{\a,\b}^\g=\eta^{\g\d}\de^{3}_{\a\b\d}F_0$. 
As in~\eqref{interformg} we associate with this data a bilinear form $g$ on $T^* M$, called intersection form. On the complement of the discriminant, i.e. the analytic subset $\mathrm{discr} M \subset M$ where $g$ is degenerate, 
the inverse of the intersection form defines a second flat metric (we still denote it by $g$). 
%With this data we can
%associate a second metric $g$, called the intersection form of $\cM$, which is
%defined by
%%
%\beq
%\label{eq:intform}
%g^{-1}(\rd x_i, \rd x_j) := i_E(\rd x_i \cdot \rd x_j)
%\eeq
%%
%where the multiplication on the r.h.s. is taken with respect to the Frobenius algebra
%structure on $TM$, and extended to the cotangent bundle via the isomorphism $\eta:
%TM \to T^*M$.  
We can associate with $\cM$ another solution of WDVV, which does
not in principle satisfy all axioms of a Frobenius manifold.
\begin{defn}
The Dubrovin dual $\widehat{\cM}$ of a Frobenius manifold $\cM$ is the quadruplet
$(\widehat{M}, E, g,\widehat{F_0})$, where $\widehat{M}= M\setminus
\mathrm{discr} M$, $E$ is the Euler vector field on $M$, $g$ is the second
metric. In flat co-ordinates $p_i$ for $g$, $\widehat{F_0}$
is defined as to satisfy
\beq
\frac{\de^3 \widehat{F_0}}{\de p_i \de p_j \de
  p_k}=G_{ia}G_{jb}\frac{\de t_\g}{\de p_k} \frac{\de p_a}{\de t_\a}\frac{\de p_b}{\de t_\b}c_\g^{\a\b}
\label{eq:dualf0}
\eeq
where $G_{ij}$ is the Gram matrix of the metric $g$.
\end{defn}

\begin{thm}[Dubrovin, \cite{MR2070050}]
The dual prepotential \eqref{eq:dualf0} induces a commutative, associative
product $\star:T\widehat{M}\otimes T\widehat{M}\to T\widehat{M}$,
\beq
\de_i \star \de_j = E^{-1}\cdot \de_i \cdot \de_j
\label{eq:dualprod}
\eeq
under which the intersection pairing is invariant
\beq
g(\de_i \star \de_j, \de_k) = g(\de_i, \de_j  \star \de_k) .
\eeq
In particular, the Euler vector field on $M$ is the identity of the dual
product on $\widehat{\cM}$.
\end{thm}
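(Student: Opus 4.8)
The plan is to verify the three assertions of Dubrovin's theorem directly from the defining relation \eqref{eq:dualf0} for $\widehat{F_0}$, treating \eqref{eq:dualprod} as the geometric repackaging of that formula. The core observation is that $\widehat{F_0}$ is defined precisely so that its third derivatives in the $g$-flat coordinates $p_i$ are the structure constants of the product $\star$, once we lower and raise indices consistently with $g$ rather than $\eta$. Concretely, writing $\widehat{c}_{ij}{}^k := \partial_{p_i}\partial_{p_j}\partial_{p_l}\widehat{F_0}\, G^{lk}$, the right-hand side of \eqref{eq:dualf0} should be recognized as the tensor $E^{\gamma}\eta_{\gamma\epsilon}c^{\epsilon}_{\delta\mu}(E^{-1})^{\mu}{}_{\cdot}$ expressed in $p$-coordinates, where $E^{-1}$ denotes the inverse of $E\cdot$ with respect to the original Frobenius product; this is the content of \eqref{eq:dualprod}. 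So the first step is to unwind the index gymnastics in \eqref{eq:dualf0} and check that it is equivalent to the coordinate-free statement $\partial_i \star \partial_j = E^{-1}\cdot\partial_i\cdot\partial_j$, using the fact that $g(\cdot,\cdot) = i_E(\cdot\,{}^2)$ means $G_{ij}$ in $t$-coordinates equals $E^{\gamma}c_{\gamma ij}$.

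Second, I would establish commutativity and associativity of $\star$. Commutativity is immediate since the original product is commutative and $E^{-1}\cdot$ is a fixed (pointwise) operator on each tangent space. Associativity is the substantive WDVV point: $(\partial_i\star\partial_j)\star\partial_k = E^{-1}\cdot E^{-1}\cdot\partial_i\cdot\partial_j\cdot\partial_k$, which is manifestly symmetric in $i,j,k$ because the original product is associative and commutative — multiplication by the fixed element $E^{-1}\cdot$ twice commutes past everything. The only subtlety is that $E\cdot$ must be invertible, which holds exactly on $\widehat{M} = M\setminus\mathrm{discr}\,M$, since the discriminant is where the operator $E\cdot$ (equivalently the intersection form $g$) degenerates — this is why the dual is only defined off the discriminant. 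I would also note that $E^{-1}$ is the unit for $\star$: $E^{-1}\star\partial_i = E^{-1}\cdot E^{-1}\cdot E\cdot\partial_i = E^{-1}\cdot\partial_i = $ wait, that gives $E^{-1}\cdot\partial_i$, not $\partial_i$ — so one must be careful: $E\star\partial_i = E^{-1}\cdot E\cdot\partial_i = \partial_i$, so it is $E$ itself (not $E^{-1}$) that is the $\star$-identity, matching the theorem's final assertion.

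Third, I would verify invariance of $g$ under $\star$: $g(\partial_i\star\partial_j,\partial_k) = i_E\big((\partial_i\star\partial_j)\cdot\partial_k\big) = i_E\big(E^{-1}\cdot\partial_i\cdot\partial_j\cdot\partial_k\big)$, and since $E^{-1}\cdot E\cdot = \mathrm{id}$ one gets $i_E(E^{-1}\cdot\,\cdot) = i_{e}(\cdot)$ in disguise — more precisely, $g(\partial_i\star\partial_j,\partial_k)$ reduces to a fully symmetric expression $\eta\big(\partial_i\cdot\partial_j\cdot\partial_k, e\big)$-type contraction that is visibly symmetric under exchanging $\partial_j$ and $\partial_k$, giving $g(\partial_i\star\partial_j,\partial_k) = g(\partial_i,\partial_j\star\partial_k)$. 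The last sentence of the theorem, that $E$ is the $\star$-identity on $\widehat{\cM}$, then follows as computed above.

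The main obstacle, I expect, is purely bookkeeping: keeping straight the two metrics $\eta$ and $g$ and the two coordinate systems $t_\alpha$ and $p_i$ while translating \eqref{eq:dualf0} into the operator identity \eqref{eq:dualprod}. In particular one must check that $\widehat{F_0}$ as characterized by its third derivatives actually exists — i.e. that the right-hand side of \eqref{eq:dualf0}, viewed as a symmetric $3$-tensor with one index raised by $G$, is the third derivative of a single function. This is a closedness (Poincaré lemma) argument that ultimately rests on the flatness of the pencil $\eta + \lambda g$, a fact stated in the excerpt as ``a general result of the theory of Frobenius manifolds''; granting that, the compatibility of the two Poisson structures of hydrodynamic type forces the required integrability. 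Everything else is formal manipulation inside each tangent space.
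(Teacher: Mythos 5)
The paper does not prove this statement: it is quoted as a theorem of Dubrovin and the reader is referred to \cite{MR2070050}, so there is no internal argument to compare yours against. Your sketch reproduces, in outline, Dubrovin's original proof and is correct where it is concrete. Commutativity and associativity of $\star$ do reduce to the observation that multiplication by the fixed invertible element $E^{-1}$ commutes past the original commutative, associative product, and the invertibility of $E\,\cdot$ off the discriminant is precisely why $\widehat M=M\setminus\mathrm{discr}\,M$ is the right domain; you also correctly catch mid-argument that the $\star$-unit is $E$, not $E^{-1}$. For the invariance of $g$, the cleanest route is the identity $\tilde g(X,Y)=\eta(E^{-1}\cdot X,Y)$ for the covariant form of the intersection metric, after which $\tilde g(X\star Y,Z)=\eta\bigl(e,\,E^{-1}\cdot E^{-1}\cdot X\cdot Y\cdot Z\bigr)$ is manifestly totally symmetric. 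Note that your parenthetical ``$G_{ij}=E^{\gamma}c_{\gamma ij}$'' is off by an inverse: $E^{\gamma}c_{\gamma ij}=\eta(E\cdot\partial_i,\partial_j)$ is the contravariant $g$ with both indices lowered by $\eta$, whereas the Gram matrix in \eqref{eq:dualf0} is its inverse, $\eta(E^{-1}\cdot\partial_i,\partial_j)$. This slip is harmless for the invariance claim (any form $\eta(A\cdot X,Y)$ with $A$ central and invertible is $\star$-invariant by the same symmetry argument), but it would matter in the index computation identifying \eqref{eq:dualf0} with \eqref{eq:dualprod}, which you state as the first step but do not carry out.

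The one genuine gap you correctly flag but do not close is the potentiality statement: that the totally symmetric $3$-tensor on the right of \eqref{eq:dualf0} is the third derivative of a single function in the $g$-flat coordinates $p_i$. This does not follow from ``compatibility of the two Poisson structures'' as such; in \cite{MR2070050} it is established from the flatness of $g$ together with an explicit closedness computation (via the system of deformed flat coordinates, i.e.\ the twisted periods). Since the paper's preceding Definition already posits $\widehat{F_0}$, one can argue this lies outside the scope of the theorem as stated, but it is the only nontrivial analytic input, and a complete proof would have to supply it.
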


\begin{rmk}
The solutions of WDVV obtained by the duality
\eqref{eq:dualprod} do not fulfill all axioms of a Frobenius manifold. First
of all, the
Euler vector field - that is, the dual unity field - need not be covariantly
constant under the Levi-Civita connection of $g$. Secondarily, when the charge $d$
of $\cM$ is different from 1, the dual prepotential is homogeneous of degree $(1-d)$
under $L_E$ \cite{MR2070050}, but it need not satisfy a quasi-homogeneity
condition if $d=1$. 
Note that in our case, while the dual prepotential will indeed fail to be homogeneous, the dual unit vector field will turn out to be nonetheless covariantly constant.
\end{rmk}

\begin{rmk}
It should be stressed that in the definition \eqref{eq:dualprod} of the dual
product, and in the proof of its associativity, no reference is made to the
fact the unit $e$ of $\cM$ be constant in flat co-ordinates $t_\a$. In other
words, the notion of Dubrovin-duality generalizes to the case in which $e$ is
not covariantly constant under the Levi-Civita connection of $\eta$.
\end{rmk}

When a Landau-Ginzburg description of $\cM$ is available we can obtain a
rather compact picture of Dubrovin's duality. It is straightforward to show
\cite{MR2070050} that the intersection pairing and dual product are obtained
by sending $\lambda\to\log\lambda$ in \eqref{eq:etalg}, \eqref{eq:clg}:

\bea
\label{eq:glg}
g(\de_i, \de_j) &=& \sum \Res_{\rd \lambda=0}\l\{ \frac{\de_i \log \lambda(p)
  \de_j \log \lambda(p)}{\lambda'(p)} \frac{\lambda(p)\rd p}{p^2} \r\} \\
\widehat{c}(\de_i, \de_j, \de_k) &=& \sum \Res_{\rd \lambda=0}\l\{ \frac{\de_i \log\lambda(p)
  \de_j \log \lambda(p) \de_k \log \lambda(p)}{\lambda'(p)} \frac{\lambda(p) \rd p}{p^2}  \r\}
\label{eq:cdlg}
\eea
where the sums run over critical points of the superpotential $\lambda$. \\

It is natural to conjecture that the notion of Dubrovin-duality could be the key to connect the Toeplitz lattice
hierarchy to the topological hierarchy of  \cite{Brini:2010ap}. Indeed,
consider the $T\simeq\bbC^*$-equivariant Gromov-Witten theory of a toric variety $X$,
where $T$ acts on $X$ with compact fixed loci. Then the genus zero primary $T$-equivariant Gromov-Witten potential of~$X$
\bea
F_0^X &=&\sum_{n=0}^\infty\sum_{\b\in H_2(X,\bbZ)}\sum_{\a_1, \dots, \a_n}
\frac{t_{\a_1}\dots t_{\a_n}}{n!}\bra\phi_{\a_1} \dots \phi_{\a_n}
\ket_{0,n,\beta}^X \\
\bra\phi_{\a_1} \dots \phi_{\a_n}
\ket_{g,n,\beta}^X &=& \int_{[\cM_{g,n,\beta}]^{\mathrm{vir}}} \prod_{i=1}^n
\mathrm{ev}^*_i \phi_{\a_i}, \qquad \phi_{j} \in H_T^\bullet(X, \mathbb{C}), \phi_{1}=\mathbf{1}
\eea
is, as in ordinary non-equivariant Gromov-Witten theory, a solution of WDVV for which the fundamental
class and point splitting axiom \cite{Kontsevich:1994qz}
pin down the direction of the unit $\mathbf{1} \in H^\bullet(X)$ as the one
that induces the Poincar\'e pairing on $H^\bullet(X)$
\beq
\frac{\de^3 F_0^X}{\de t_1 \de t_{\a}\de t_{\b}} = \int_X \phi_\a \cup \phi_\b
= \eta(\de_\a, \de_\b)
\eeq
i.e., a flat invariant pairing on $TH^\bullet(X)$. In other words, in
presence of a torus action the tangent bundle
of the equivariant quantum co-homology $\cM_X=QH_T^\bullet(X)$ is again endowed with
the structure of a commutative, associative algebra with a covariantly
constant unit. What departs from the ordinary theory of Frobenius manifolds is
the existence of an Euler vector field, as the degree axiom of Gromov-Witten theory breaks down, due
to the non-trivial grading of the ground ring $\bbC(\nu)$ of $QH^\bullet_T(X)$. As a consequence, the genus zero equivariant Gromov-Witten
potential of $X^{\circlearrowleft T}$ is still a solution of WDVV, but it fails to be quasi-homogeneous. \\

As was discussed in detail in \cite{2010arXiv1006.0649M}, the Dubrovin-duals
of charge $d=1$ Frobenius manifolds are solutions of WDVV with covariantly
constant unit, whereas the dual Euler vector field is ill-defined. They are
therefore the natural structures to look at in order to connect our results in
Section \ref{sec:hom} to the topology of moduli spaces. We have indeed the following 

\begin{thm}
The Dubrovin dual $\widehat{\cM}_{AL}$ of the Frobenius manifold $\cM_{AL}$
associated with the Toeplitz reduction of 2D-Toda is the Frobenius algebra
structure induced on $TM_{0;1,1}$ by the dual prepotential
\beq
\widehat{F_0}=\frac{1}{2}v^2 w + \Li_3(\re^w),
\label{eq:f0inhom}
\eeq
with constant unit $E=\de_v$.
In \eqref{eq:f0inhom}, $v$ and $w$ are  flat co-ordinates for the intersection
form and are defined as in \eqref{eq:vw1},
\eqref{eq:spothom}, \eqref{eq:vwt1t2}
\begin{align}
t_2 &= v+w, \nn \\
t_1 &= \re^v (\re^w -1). \nn
\end{align}
\end{thm}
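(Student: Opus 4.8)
\emph{Proof proposal.} The plan is to run the Landau--Ginzburg computation of the previous theorem a second time, now with the logarithmic formulae \eqref{eq:glg}--\eqref{eq:cdlg}, and to match the outcome with \eqref{eq:f0inhom}. I would organize this in three steps: (i) produce flat coordinates for the intersection form $g$; (ii) identify the dual unit; (iii) compute the single nontrivial dual structure constant and integrate it.

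For (i): the contravariant components of $g$ in the $\eta$-flat coordinates $(t_1,t_2)$ are already available, since they are the leading coefficients of the Poisson bracket $\{\,,\,\}_2$ computed above, $g^{11}=2t_1\re^{t_2}$, $g^{12}=t_1+\re^{t_2}$, $g^{22}=2$ (equivalently $g^{\alpha\beta}=E^\gamma c^{\alpha\beta}_\gamma$ via \eqref{interformg}, with $E=t_1\partial_{t_1}+\partial_{t_2}$). Substituting the change of variables of \eqref{eq:vwt1t2}, written in solved form as $t_2=v+w$, $t_1=\re^v(\re^w-1)$, a short direct computation shows that these components become the constant anti-diagonal matrix $g^{vv}=g^{ww}=0$, $g^{vw}=1$; hence $(v,w)$ are flat for $g$. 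Since $\det(g^{\alpha\beta})=-(t_1-\re^{t_2})^2=-\re^{2v}$ never vanishes, the discriminant of $\cM_{AL}$ is empty and $\widehat{M}=M_{0;1,1}$. (Alternatively one reaches the same conclusion by evaluating \eqref{eq:glg} on the superpotential \eqref{eq:spothom} directly, moving the residues off the critical points of $\lambda$ and onto the logarithmic points $p\in\{0,\re^v,\re^{v+w},\infty\}$ of $\log\lambda$.)

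For (ii): from $t_1=\re^v(\re^w-1)$ and $t_2=v+w$ one has $\partial_v t_1=t_1$ and $\partial_v t_2=1$, so $\partial_v=t_1\partial_{t_1}+\partial_{t_2}=E$. Thus in the new coordinates the Euler field of $\cM_{AL}$ is the constant coordinate vector field $\partial_v$; by Dubrovin's theorem it is the identity of the dual product, and since $g$ is constant in $(v,w)$ it is covariantly constant for the Levi-Civita connection of $g$. This is the covariant constancy of the dual unit anticipated in the preceding remark, which is a priori not guaranteed here because $\cM_{AL}$ has charge $d=1$. For (iii): as $E=\partial_v$ is the dual unit and $g_{\alpha\beta}$ is the constant anti-diagonal metric, the relation $\partial_v\partial_\alpha\partial_\beta\widehat{F_0}=\widehat c_{v\alpha\beta}=g_{\alpha\beta}$ forces $\widehat{F_0}=\tfrac12 v^2 w+\phi(w)$ up to an irrelevant polynomial of degree $\le 2$, for a single unknown function $\phi$ of $w$ alone. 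The remaining third derivative $\widehat c_{www}=\phi'''(w)$ is obtained from \eqref{eq:cdlg}: using $\partial_w\log\lambda(p)=\re^{v+w}/(p-\re^{v+w})$ (a one-line computation from \eqref{eq:spothom}, recalling that $v$ and $w$ are independent coordinates) and evaluating the sum of residues at the critical points of $\lambda$ --- most economically by trading it, via the residue theorem on $\bbP^1$, for minus the residues of the same differential at $p=0$ and $p=\re^{v+w}$ --- one gets $\widehat c_{www}=\re^w/(1-\re^w)$. Integrating three times in $w$ and using $\partial_w\Li_{k+1}(\re^w)=\Li_k(\re^w)$ with $\Li_1(x)=-\log(1-x)$ yields $\phi(w)=\Li_3(\re^w)$ modulo a quadratic polynomial, i.e.\ \eqref{eq:f0inhom}; associativity of $\widehat{F_0}$ is automatic (it is part of Dubrovin's theorem, and in any case vacuous for a two-dimensional manifold), and the coordinate identities $t_2=v+w$, $t_1=\re^v(\re^w-1)$ are just \eqref{eq:vwt1t2} rewritten.

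The step I expect to require the most care is the residue bookkeeping in (iii): one must carry the normalization of the primary differential $\rd p/p$ and the chosen branch of $\log\lambda$ through Dubrovin's duality, so that the trilogarithm emerges with exactly the sign displayed in \eqref{eq:f0inhom} (and, relatedly, one must pin down the at most quadratic ambiguity in $\widehat{F_0}$, which is inessential for the Frobenius structure). Steps (i) and (ii) are then short linear-algebra verifications, and the identification of $\phi$ with $\Li_3(\re^w)$ is the only substantive content.
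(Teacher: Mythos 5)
Your strategy is exactly the one the paper intends (its entire proof is the sentence that the result ``follows from a straightforward calculation from \eqref{eq:spothom}, \eqref{eq:glg} and \eqref{eq:cdlg}''), and your steps (i) and (ii) are correct: the leading coefficients of $\{\,,\,\}_2$ do become the constant anti-diagonal matrix in the coordinates $t_2=v+w$, $t_1=\re^v(\re^w-1)$, and $\partial_v=t_1\partial_{t_1}+\partial_{t_2}=E$, which is therefore the covariantly constant dual unit.

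The gap is in step (iii), precisely at the residue bookkeeping you yourself flag as the delicate point. Carrying out the computation you describe, with $\partial_w\log\lambda=\re^{v+w}/(p-\re^{v+w})$ and $\lambda/(\lambda'p^2)=(p-\re^v)(p-\re^{v+w})/\bigl(p\,(p^2-2p\re^{v+w}+\re^{2v+w})\bigr)$, the differential whose critical-point residues give $\widehat c_{www}$ is
\beq
\frac{\re^{3(v+w)}\,(p-\re^v)}{p\,(p-\re^{v+w})^2\,(p^2-2p\re^{v+w}+\re^{2v+w})}\,\rd p ,\nn
\eeq
which is $O(p^{-4})$ at infinity; its residues at $p=0$ and $p=\re^{v+w}$ are $-1$ and $1/(1-\re^w)$ respectively, so the residue theorem gives
\beq
\widehat c_{www}=-\Bigl(-1+\frac{1}{1-\re^w}\Bigr)=\frac{\re^w}{\re^w-1},\nn
\eeq
i.e.\ the \emph{opposite} sign to the value $\re^w/(1-\re^w)$ you assert, and it integrates to $-\Li_3(\re^w)$. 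The same sign comes out purely algebraically from $\widehat c_{www}=g(E^{-1}\cdot\partial_w\cdot\partial_w,\partial_w)$ using the structure constants of \eqref{eq:f0hom}: for instance at $t_1=-1/2$, $\re^{t_2}=1/2$ one finds $\partial_w=e$ and $E^{-1}=-\partial_v$, hence $\widehat c_{www}=-g_{vw}=-1$, whereas $\re^w/(1-\re^w)=+1$ there. No overall sign convention for $g$ or $\eta$ can absorb this, because flipping $g$ flips $\tfrac12 v^2w$ and the $\Li_3$ term simultaneously; the relative sign of the two terms in $\widehat{F_0}$ is convention-independent. So as written your step (iii) does not close: you must either exhibit the missing sign explicitly (and then reconcile it with the fact that the \emph{same} residue conventions applied to \eqref{eq:clg} reproduce \eqref{eq:f0hom} correctly), or accept that the computation yields $\widehat F_0=\tfrac12 v^2w-\Li_3(\re^w)$ and that the match with \eqref{eq:f0inhom} requires revisiting a sign in the statement or in the identification with the Gromov--Witten potential. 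Everything else in your argument --- the reduction to a single unknown $\phi(w)$, the at-most-quadratic ambiguity, the vacuity of associativity in two dimensions --- is fine.
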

\pf The proof follows from a straightforward calculation from
\eqref{eq:spothom}, \eqref{eq:glg} and \eqref{eq:cdlg}. \\ \qed \\
The prepotential \eqref{eq:f0inhom} coincides with the genus zero
Gromov-Witten potential of $\cO_{\bbP^1}(-1)_{[\nu]}$ $\oplus$ $\cO_{\bbP^1}(-1)_{[-\nu]}$
\cite{Brini:2010ap}, upon sending $v\to\ri \frac{v}{\nu}$, where $\nu$
is the equivariant parameter of the anti-diagonal $\bbC^*$-action on
$\cO_{\bbP^1}(-1)\oplus\cO_{\bbP^1}(-1)$. The combination of the Toeplitz
reduction with Dubrovin's duality therefore yields a mirror Landau-Ginzburg
description of the Gromov-Witten theory of local $\bbC\bbP^1$, with a logarithmic superpotential
\beq
\widehat{\lambda}(p)=\log\l(p\frac{p-\re^v}{p-\re^{v+w}}\r).
\label{eq:spotinhom}
\eeq
% 
%\section{Deformed flat coordinates via oscillating integrals}

\subsection{Almost duality and twisted Picard--Lefschetz theory}
We want to compute a system of deformed flat coordinates for the Frobenius
manifold $\widehat{\cM}_{AL}$ using oscillating integrals for the
Landau-Ginzburg model given by the superpotential \eqref{eq:spotinhom}.

Such model corresponds to the twisted Picard--Lefschetz theory of the
meromorphic function $\lambda_{v,w}(p)=p\frac{p-\re^v}{p-\re^{v+w}}$. In
particular it corresponds to considering cycles in the complement of
$\lambda_{v,w}^{-1}(0)$, endowed with a local system of coefficients
transforming nontrivially upon circuit around such hypersurface, as opposed to
ordinary Picard--Lefschetz theory, which considers cycles on
$\lambda_{v,w}^{-1}(0)$. We will here review the basics of twisted
Picard--Lefschetz theory, referring the reader to \cite{MR936695} for more details.\\

Let us denote by $\pi:X\to\mathbb{P}^1\setminus\{0,\re^v,\re^{v+w},\infty\}$ the
cover where $\widehat{\lambda}(p)$ is defined (an infinite number of sheets
joint at the branch cuts $[0,\re^{v+w}]$, $[\re^v,\infty]$). The oscillating
integral formula for the deformed flat coordinates of $\widehat{\cM}_{AL}$ is
\beq \mathfrak{p}_\alpha(z):=\frac{1}{2\pi\ri}\int_{\gamma_\alpha} \re^{z\widehat{\lambda}(p)} \frac{\di
  p}{p}, \qquad \alpha=1,2.
\label{eq:loopint}
\eeq
The integration cycles $\gamma_1$, $\gamma_2$ are a basis of the homology with
local coefficients
$H_1(\mathbb{P}^1\setminus\{0,\re^v,\re^{v+w},\infty\},\{0,\re^v\};\mathbf{L}(q))$.

In general, for a superpotential $\widehat{\lambda}=\log(\lambda(x,a))$ with
$\lambda:\mathbb{C}^n\times\mathbb{C}^\mu\to\mathbb{P}^1$ and
$\lambda_a:=\lambda(\cdot,a)$ a meromorphic function, the homology with local
coefficients $H_\bullet(\mathbb{C}^n\setminus\lambda_a^{-1}(\{0,\infty\});\mathbf{L}(q))$ can be
defined using the complex generated by singular chains with coefficients in
$\mathbb{Z}[q,q^{-1}]$ on the infinite cover
$X$ of $\mathbb{C}^n\setminus\lambda_a^{-1}(\{0,\infty\})$ where $\log(\lambda(x,a))$ is defined; multiplication by $q$ is
defined by the covering transformation moving each point up one sheet
(i.e. the deck transformation associated with a circuit around
$\lambda_a^{-1}(0)$). Notice that, by assigning a specific complex value
$q=\re^{2\pi \mathrm{i} z}$, we obtain the homology with a local system of
coefficients described by the function $\lambda^z$. This, in turn, is defined
using the chain groups generated over $\mathbb{Z}(\re^{2\pi \mathrm{i} z})$ by
pairs $(\phi,s)$ where $\phi$ is a singular simplex in
$\mathbb{C}^n\setminus\lambda_a^{-1}(\{0,\infty\})$ and $s$ is a specific branch of
$\lambda^z|_\phi$, quotiented by the relation $\re^{2\pi \mathrm{i}
  z}(\phi,s)\sim(\phi,\re^{2\pi \mathrm{i} z}s)$. Then, the usual boundary
operator gives a complex and its homology is denoted by
$H_*(\mathbb{C}^n\setminus\lambda_a^{-1}(\{0,\infty\});\mathbb{Z}(\re^{2\pi \mathrm{i} z}))$.

The homology groups $H_n(\mathbb{C}^n\setminus\lambda_a^{-1}(\{0,\infty\}),\lambda_a^{-1}(0);\mathbf{L}(q))$ of $\mathbb{C}^n\setminus\lambda_a^{-1}(\{0,\infty\})$ relative to (a tubular neighbourhood of) $\lambda_a^{-1}(0)$  with
local coefficients can be defined along the same lines as for the absolute case.

\begin{rmk}
It is interesting to notice \cite{MR936695, MR2070050} that, considering the suspension $\tilde{\lambda}_a:=\lambda_a(x)-y^2$ and the zero sets $V_a=\lambda_a^{-1}(0)$, $\tilde{V}_a=\tilde{\lambda}_a^{-1}(0)$
\beq\nn 
H_n(\tilde{V}_a)\simeq H_n(\mathbb{C}^n\setminus\lambda_a^{-1}(\{0,\infty\}),\lambda_a^{-1}(0);\mathbb{Z}(-1)),
\eeq
\beq\nn
H_n(V_a)\simeq
H_n(\mathbb{C}^n\setminus\lambda_a^{-1}(\{0,\infty\}),\lambda_a^{-1}(0);\mathbb{Z}(1)).
\eeq
In this way we could say that in twisted Picard-Lefschetz theory the study of 
$\widehat{\lambda}$ interpolates between a superpotential $\lambda$ and its
suspension $\tilde{\lambda}$. The relevance of suspensions in local mirror
symmetry has already been pointed out in various places in the literature, see {\it e.g.}
\cite{Hori:2000ck, MR2651908}.
\end{rmk}

\subsection{Twisted periods}
Let us now turn to the computation of the loop integrals \eqref{eq:loopint}. A basis $\{\gamma_1,\gamma_2\}$ for
$H_1(\mathbb{P}^1\setminus\{0,\re^v,\re^{v+w},\infty\},\{0,\re^v\};\mathbf{L}(q))$
is given by any lift to $X$ of the two relative paths on
$(\mathbb{P}^1\setminus\{0,\re^v,\re^{v+w},\infty\},\{0,\re^v\})$ issuing from $\re^v$
and encircling $\re^{v+w}$ or $\infty$ respectively (see Fig. \ref{circles}).\\
\begin{figure}[h]
\begin{center}
\includegraphics[width=12cm]{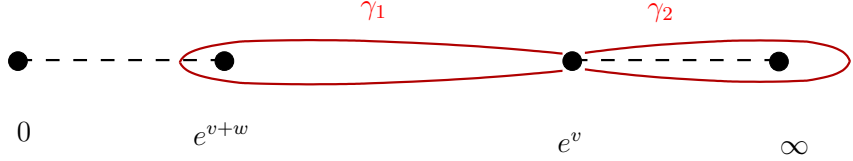}
\end{center}
\caption{Paths of integration for the twisted periods.}
\label{circles}
\end{figure}

The integration can be performed explicitly by making $\pi(\gamma_1)$ and $\pi(\gamma_2)$ tend to the segments $[\re^{v+w}, \re^{v}]$ and $[\re^{v},\infty]$. Indeed it is easy to see that
\beq
\int_{C_\epsilon^{\exp(v+w)}} \re^{z \widehat{\lambda}(p)} \frac{\di p}{p} \,
\to\ 0\ ,\ \int_{C_\epsilon^{\exp(v)}} \re^{z \widehat{\lambda}(p)} \frac{\di
  p}{p} \, \to\, 0\ ,\ \int_{C_\epsilon^{\infty}} \re^{z \widehat{\lambda}(p)}
\frac{\di p}{p}\, \to\, 0
\eeq
as $\epsilon \to 0$, where $C_{\epsilon}^a$ is the (non-closed) lift of a circle of radius $\epsilon$ around $p=a$ and $-1<z<0$.\\

Moreover, using Euler's integral representation for the hypergeometric function
\begin{equation}\label{euler}
\phantom{.}_2F_1(a,b,c,z)=\frac{\Gamma(c)}{\Gamma(b)\Gamma(c-b)}\, \int_0^1 x^{b-1} (1-x)^{c-b-1} (1-zx)^{-a} \di x,
\end{equation}
(for $\mathfrak{Re}(c)>\mathfrak{Re}(b)>0$) we can express the remaining line integrals as
\beq
\begin{split}
\mathfrak{p}_1(z)=&\frac{1}{2\pi \ri
}\left(\int_{[\re^{v+w},\re^{v}]}^{+}+\int_{[\re^{v},\re^{v+w}]}^-\right)\lambda^z(p)\frac{\di
  p}{p}= \frac{1}{2\pi \ri } (1-\re^{2 \pi \ii z})\, \int_{\re^{v+w}}^{\re^v}\,\lambda^z(p) \frac{\di p}{p}=\\
&= -  z \re^{2\pi \ri z} \re^{zv} (1-\re^w) \phantom{.}_2F_1(1-z,1+z,2,1-\re^w)
%(-1)^z(1-\re^{2 \pi \ii z}) \frac{\pi z}{\sin(\pi z)} \re^{zv} (1-\re^w) \phantom{.}_2F_1(1-z,1+z,2,1-\re^w)
\end{split}
\label{eq:p1}
\eeq
where we used $\Gamma(1+z)\Gamma(1-z)=\frac{\pi z}{\sin(\pi z)}$ and we applied the change of variables $p=\re^v(1-(1-\re^w)x)$ in (\ref{euler}).\\

Similarly,
\begin{equation}
\begin{split}
\mathfrak{p}_2(z)=&\frac{1}{2\pi \ri }\left(\int_{[\re^v,\infty]}^{+}+\int_{[\infty,\re^v]}^-\right)\lambda^z(p)\frac{\di p}{p}= \frac{1}{2\pi \ri }(1-\re^{2 \pi \ii z})\, \int_{\exp(v)}^{\infty}\,\lambda^z(p) \frac{\di p}{p}=\\
&=  \re^{\pi \ri z} \re^{zv} \phantom{.}_2F_1(z,-z,1,\re^w)
%-(1-\re^{2 \pi \ii z}) \frac{\pi }{\sin(\pi z)} \re^{zv} \phantom{.}_2F_1(z,-z,1,\re^w)
\end{split}
\label{eq:p2}
\end{equation}
where, this time, $x=p$ in (\ref{euler}).\\
\begin{rmk}
It is worthwhile to stress what happens when we specialize to the suspension
by setting $z=1/2$, that is, when we compute the {\it odd periods} of
$\cM_{AL}$. In this case, the universal cover $X$ reduces to an elliptic
curve, and we obtain 
\bea
\label{eq:sw1}
\mathfrak{p}_1\l(\frac{1}{2}\r) &=& \frac{2 e^{v/2}
   \left(E\left(1-e^w\right)-K\left(1-e^w\right)\right)}{\pi 
   \left(e^w-1\right)} \\
\mathfrak{p}_2\l(\frac{1}{2}\r) &=& -\frac{2 i e^{v/2} E(e^w)}{\pi }
\label{eq:sw2}
\eea
where $K(x)$ and $E(x)$ denote the complete elliptic integrals of the first
and second kind respectively. Remarkably, upon identifying $u:=\re^v(1-\re^{w/2})$, $\Lambda^4 := \re^{v+w}$,
\eqref{eq:sw1}-\eqref{eq:sw2} yield respectively the derivative of the
effective prepotential and the quantum Coulomb branch parameter of $\cN=2$
$SU(2)$ super Yang--Mills theory in four dimensions \cite{Seiberg:1994rs}, in
a vacuum parameterized by a classical value $u=\bra \tr \phi^2 \ket$ for the
adjoint scalar and with RG invariant scale $\Lambda$.
\end{rmk}

The twisted periods \eqref{eq:p1}-\eqref{eq:p2} give a basis of solutions for
the deformed flatness conditions associated with the prepotential
\eqref{eq:f0inhom}.  We thus recover from the LG perspective the observation of \cite{Brini:2010ap}
that the quantum differential equation for
$\cO_{\bbP^1}(-1)_{[-\nu]}\oplus \cO_{\bbP^1}(-1)_{[\nu]}$ factorizes in the
product of an exponential ODE satisfied by $v$ (as required by the string
-axiom) and a Gauss ODE in the variable $w$. 
The choice of cycles in \eqref{eq:p1}-\eqref{eq:p2} however turns out to be non-canonical from
the point of view of Gromov--Witten theory; in particular, they are related by
an affine transformation to the \emph{topological} deformed flat coordinates $\mathfrak{p}_\alpha^{\mathrm{top}}(z)=\sum_{p=0}^\infty \mathfrak{p}_{\alpha,p}^{\mathrm{top}}$ for the resolved conifold, i.e. those deformed flat coordinates for the associated non-homogeneous Frobenius manifold such that
\beq
\mathfrak{p}_{\alpha,p}^{\mathrm{top}}:=\partial_{t^{\alpha,p}} \cF_0,
\eeq
where now $\cF_0$ is the restriction to genus zero and to primary fields of the  Gromov--Witten
potential \eqref{eq:potX} of $\cO_{\bbP^1}(-1)_{[\nu]}\oplus
\cO_{\bbP^1}(-1)_{[-\nu]}$. Explicitly, it was found in \cite{Brini:2010ap} that
%\beq
%\begin{split}
%&F_0^X = \sum_{n=0}^\infty\sum_{\b\in H_2(X,\bbZ)}\sum_{\a_1, \dots, \a_n}^\infty
%\frac{t^{\a_1,p_1}\dots t^{\a_n,p_n}}{n!}\bra\phi_{\a_1,p_1} \dots \phi_{\a_n,p_n}
%\ket_{0,n,\beta}^X \\
%&\bra\phi_{\a_1,p_1} \dots \phi_{\a_n,p_n}
%\ket_{g,n,\beta}^X = \int_{[\cM_{g,n,\beta}]^{\mathrm{vir}}} \prod_{i=1}^n
%\mathrm{ev}^*_i \phi_{\a_i}\wedge\psi_i^{p_i},\qquad 
%\phi_{j} \in H_T^\bullet(X, \mathbb{C}), \phi_{1}=\mathbf{1}
%\end{split}
%\eeq
%and $\psi_i$ is the first Chern class of the tautological bundle $L_i$ at the $i$-th marked point on $\cM_{g,n,\beta}$.
%From \cite{Brini:2010ap} we read
\beq
\begin{split}
\mathfrak{p}^{\rm top}_1(z)=&
\l(-\frac{1}{z}+\pi  \cot (\pi  z)-2 \gamma\r) \phantom{.}_2F_1(-z,z,1,\re^w) \re^{vz}\\
&- \frac{\pi z}{\sin{\pi z}} \phantom{.}_2F_1(z+1,-z+1,2,1-\re^w)(1-\re^w)\re^{vz}\\
\mathfrak{p}^{\rm top}_2(z)=&\frac{\phantom{.}_2F_1(-z,z,1,\re^w)\re^{vz}-1}{z}
\end{split}
\eeq
where $\gamma$ is the Euler-Mascheroni constant. By comparing with our formulas above for the oscillating integrals, we find
$$\left(\begin{array}{c} \mathfrak{p}^{\mathrm{top}}_1(z)\\ \mathfrak{p}_2^{\mathrm{top}}(z)\end{array}\right)\,=\, \left(
\begin{array}{cc}
0 & \frac{\re^{-\mathrm{i} \pi z}}{z} \\
\frac{\re^{-2 \ri \pi  z} (1+2 z \gamma-\pi  z \cot (\pi  z))}{z^2}
%\frac{1}{z}\re^{-2\mathrm{i} \pi z}\l( \frac{1}{z}-2\gamma-\pi\cot\pi z\r) 
 & -\pi  \re^{-\ri \pi  z} z \csc (\pi  z) 
\end{array}
\right) \left(\begin{array}{c} \mathfrak{p}_1(z)\\ \mathfrak{p}_2(z)\end{array}\right)-\left(\begin{array}{c} \frac{1}{z}\\ 0\end{array}\right).$$

\section{Outlook}
\label{sec:concl}
We list here some possible developments of this work. 
First of all, the bi-Hamiltonian structure we constructed deserves further investigation: it would be important, on one hand, to elucidate the relation with the bi-Hamiltonian structure of 2D-Toda, and on the other, to find a full dispersive formulation of the pencil. Moreover, the factorization of the Lax matrices that we observe in the Toeplitz reduction points to a generalization of this system to a class of rational reductions of the 2D-Toda hierarchy and, in the dispersionless limit, to corresponding examples of (almost) Frobenius manifolds.

Secondarily, our study of the Frobenius structure \eqref{eq:f0hom}
suggests that an interesting generalization of the Dubrovin--Zhang theory
should find a place in the case of conformal Frobenius manifolds with non-costant unit,
and, correspondingly, of bi-Hamiltonian hierarchies with non-exact Poisson
pencils. On a more practical note, we remark also that the second half of the Levelt basis for
\eqref{eq:f0hom} needs further understanding and an explicit construction. 

On the dual side, an enticing possibility would be to leverage our
construction of a Landau--Ginzburg mirror in order to shed some light on the
higher genus theory, and to generalize the picture to more general target
spaces. We leave these problems for future investigation.

\begin{appendix}
\section{The semi-infinite Toeplitz lattice}
\label{sec:semiinf}
The semi-infinite Lax matrices are obtained by restricting the matrix indices in  ~\eqref{L12-1}-\eqref{L12-2}  to non-negative values.
They are indeed simply given by the lower-right blocks in~\eqref{L12-m}:
\beq
L_1 = \left(\begin{tabular}{lllll}
$-x_1y_0$  &  $1$ &  $0$      &  $0$ &   \\
$-v_1x_2y_0$ &  $-x_2y_1$ & $1$&  $0$   & \\ 
$-v_1 v_2 x_3y_0$ &  $-v_2 x_3y_1$ & $ -x_3y_2$&  $1$ & \\
$ -v_1 v_2 v_3 x_4y_0$ &  $ -v_2 v_3 x_4y_1$ & $-v_3 x_4y_2$  & $ -x_4y_3$   &
\\
 & &  &    &  $\ddots$\\
\end{tabular}
\right), \nn
\eeq
\beq
L_2 =
\left(\begin{tabular}{lllll}
$-x_0y_1$  &  $-x_0y_2$ & $-x_0y_3$     & $-x_0y_4$ &   \\
$1 -x_1y_1$ &  $-x_1y_2$  & $-x_1y_3$& $-x_1y_4$   & \\
~~$0$       &  $1 -x_2y_2$ & $ -x_2y_3$&  $-x_2y_4$ & \\
~~$0$       &  ~~$0$      & $ 1 -x_3y_3$  & $ -x_3y_4$   &  \\
 & &  &    &  $\ddots$\\
\end{tabular}
\right). \nn
\eeq

The first Lax matrix still factorizes as 
\[
L_1 = A B^{-1} 
\]
with semi-infinite matrices $A$, $B$ still given by~\eqref{aby}, while for $L_2$ we have
\beq \label{hatl}
L_2 = B A^{-1} + E
\eeq
where the matrix $E$, which is zero except for the first row, is given by
\[
E_{n,m} = \frac{v_0}{y_0} \delta_{n,0} y_{m+1}, \qquad n,m\geq0.
\]
This is due to the fact that, in the semi-infinite case, the matrix $\L^{-1}$ is the right-inverse of $\L$ but not its left-inverse; indeed, in this case
\beq \label{leftinv}
\L^{-1} \L = 1 - \mathcal{E}
\eeq
with $\mathcal{E}$ everywhere zero but in the upper-left corner, where it is equal to $1$. 
In the proof of Proposition~\ref{prop-fact} the left-inverse property of $\L^{-1}$ is used only in the factorization of $L_2$; using~\eqref{leftinv} instead, we easily obtain~\eqref{hatl}.

In the semi-infinite case, the matrix product $L_1 L_2$ does not involve infinite sums and the problem with associativity mentioned in Remark~\ref{assoc} does not arise. We need however to correct $L_2$ with the contribution of $E$; we have
\begin{prop}
In the semi-infinite Toeplitz lattice the Lax matrices satisfy the constraint
\[
L_1 \hat L_2 =1,
\]
where 
\beq
\hat L_2 = L_2 - E = 
\left(\begin{tabular}{lllll}
$-\frac{y_1}{y_0}$  &  $-\frac{y_2}{y_0}$ & $-\frac{y_3}{y_0}$     & $-\frac{y_4}{y_0}$ &   \\
$1 -x_1y_1$ &  $-x_1y_2$  & $-x_1y_3$& $-x_1y_4$   & \\
~~$0$       &  $1 -x_2y_2$ & $ -x_2y_3$&  $-x_2y_4$ & \\
~~$0$       &  ~~$0$      & $ 1 -x_3y_3$  & $ -x_3y_4$   &  \\
 & &  &    &  $\ddots$\\
\end{tabular}
\right). \nn
\eeq
\end{prop}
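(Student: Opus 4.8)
The plan is to read the identity off the factorization of the Lax operators, so that no entrywise computation is needed. In the semi-infinite case the factorization $L_1=AB^{-1}$ of Proposition~\ref{prop-fact} survives verbatim, since it uses only the right-inverse relation $\L\L^{-1}=1$; and \eqref{hatl} identifies $\hat L_2=L_2-E$ with $BA^{-1}$. Thus $L_1\hat L_2=(AB^{-1})(BA^{-1})$, and the assertion is equivalent to saying that this product may be reassociated as $A\bigl(B^{-1}B\bigr)A^{-1}=AA^{-1}=1$.

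The two one-sided inverses entering this chain cause no trouble. From \eqref{aby}, $B^{-1}=\tfrac1y(1-v\L^{-1})^{-1}y$ and $A^{-1}=-\tfrac1y(1-\L)^{-1}y^+$, with $(1-v\L^{-1})^{-1}=\sum_{k\ge0}(v\L^{-1})^k$ and $(1-\L)^{-1}=\sum_{k\ge0}\L^k$, and the telescoping identities yielding $B^{-1}B=BB^{-1}=1$ and $A^{-1}A=AA^{-1}=1$ involve only powers of $\L^{-1}$, respectively only powers of $\L$, so the anomaly $\L^{-1}\L=1-\mathcal E$ of \eqref{leftinv} never intervenes; this is precisely why it is $\hat L_2$, and not $L_2$, that satisfies the constraint. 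What really has to be justified is that the reassociation $(AB^{-1})(BA^{-1})=((AB^{-1})B)A^{-1}=(A(B^{-1}B))A^{-1}$ is legitimate, and here is where the semi-infinite hypothesis enters, in contrast with Remark~\ref{assoc}. I would record, once and for all, a small lemma: when one multiplies several semi-infinite matrices each of which is bi-diagonal, or triangular with the index bounded on the appropriate side, then for every pair of outer indices the contracted sums are finite and hence may be freely rearranged. In our situation $A$, $B$ are bi-diagonal, $A^{-1}$ and $B^{-1}$ are triangular of opposite orientations, $L_1=AB^{-1}$ and $\hat L_2=BA^{-1}$ are Hessenberg in opposite directions, and in each of the products above the intermediate index runs over a set that is bounded above by an integer depending only on $n$ and $m$ and, crucially, bounded below by $0$ — hence finite — for all $n,m$. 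Once this is in place, the chain $L_1\hat L_2=(AB^{-1})(BA^{-1})=A(B^{-1}B)A^{-1}=AA^{-1}=1$ is immediate.

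The main obstacle is exactly this associativity bookkeeping: one must track the supports of $L_1$, $\hat L_2$, $A$, $B$, $A^{-1}$, $B^{-1}$ and of all the partial products carefully enough to see that every sum encountered is finite, while avoiding a slide into the entrywise verification that would defeat the point of using the factorization; in the bi-infinite case the lower cutoff disappears, the middle sum in $(AB^{-1})(BA^{-1})$ runs to $-\infty$, and the identity genuinely fails, consistently with Remark~\ref{assoc}. As an independent sanity check one could instead multiply the displayed matrices for $L_1$ and $\hat L_2$ directly, noting that the telescoping products $v_a v_{a+1}\cdots$ filling the columns of $L_1$ collapse against the lone $v_k$ on the subdiagonal of $\hat L_2$; but I would keep the factorization argument as the primary proof.
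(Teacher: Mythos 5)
Your argument is correct and coincides with the paper's own (largely implicit) reasoning: the paper likewise identifies $\hat L_2=L_2-E$ with $BA^{-1}$ via the corrected factorization and then appeals to the fact that in the semi-infinite case the product $L_1\hat L_2=(AB^{-1})(BA^{-1})$ involves only finite sums, so it may be reassociated to $A(B^{-1}B)A^{-1}=1$. Your explicit bookkeeping of supports, and the observation that the telescoping identities for $A^{-1}A$, $AA^{-1}$, $B^{-1}B$, $BB^{-1}$ never invoke the anomalous relation $\L^{-1}\L=1-\mathcal{E}$, simply makes precise what the paper asserts in prose.
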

Note that the associativity problem is still present for the product $\hat L_2 L_1$, which is not equal to the identity matrix. 

A special role is played by the constraint $x_0 y_0 =1$, which is in particular satisfied by the solution of the semi-infinite Toeplitz lattice obtained from the unitary matrix model~\cite{MR1794352}. Under such constraint, which is clearly preserved by the hierarchy (cf. equation~\eqref{eq:lateom}), the matrix $E$ vanishes, hence $L_1 L_2 =1$. 

\begin{rem}
Adler - van Moerbeke have shown~\cite{MR1794352, MR1993333} that the ratios
\[
x_n = (-1)^n \frac{\tau_n^{(1)}}{\tau_n^{(0)}}, \quad
y_n = (-1)^n \frac{\tau_n^{(-1)}}{\tau_n^{(0)}}, \quad
n \geq1
\]
of the tau-functions of the unitary matrix integral defined by
\[
\tau_n^{(k)} = \int_{U(n)} (\det M)^k \re^{\sum_{j=1}^{\infty} \tr (s^{(1)}_j M^j - s^{(2)}_j \bar M^j )} \ dM 
\]
satisfy the semi-infinite Toeplitz lattice with $x_0(s)=y_0(s)=1$ and $x_n(0)=y_n(0)=0$ for $n>0$. Since $v_0=0$, for such solution we have $L_1 L_2 =1$.
\end{rem}

\end{appendix}
\bibliography{miabiblio}
\bibliographystyle{amsalpha}

\end{document}